\documentclass[leqno]{amsart}
\usepackage{amsmath}
\usepackage{amssymb}
\usepackage{amsthm}
\usepackage{enumerate}
\usepackage[mathscr]{eucal}
\theoremstyle{plain}
\newtheorem{theorem}{Theorem}[section]
\newtheorem{prop}[theorem]{Proposition}
\newtheorem{cor}{Corollary}[theorem]
\newtheorem{lemma}{Lemma}[section]

\theoremstyle{definition}
\newtheorem{definition}{Definition}[section]
\newtheorem{remark}{Remark}[section]

\newtheorem{example}{Example}[theorem]

%\setlength{\textwidth}{121.9mm}
%\setlength{\textheight}{176.2mm}
%\setpagenumbers
%\numberwithin{equation}{section}
\usepackage[pagewise]{lineno}
%\linenumbers
\begin{document}
\title[Orthogonality and Numerical radius inequalities  ]{Orthogonality and Numerical radius inequalities of operator matrices}
\author[Arpita Mal, Kallol Paul and Jeet Sen]{Arpita Mal, Kallol Paul and Jeet Sen}

\newcommand{\acr}{\newline\indent}  

\address[Mal]{Department of Mathematics\\ Jadavpur University\\ Kolkata 700032\\ West Bengal\\ INDIA}
\email{arpitamalju@gmail.com}

\address[Paul]{Department of Mathematics\\ Jadavpur University\\ Kolkata 700032\\ West Bengal\\ INDIA}
\email{kalloldada@gmail.com}

\address[Sen]{Department of Mathematics\\ Jadavpur University\\ Kolkata 700032\\ West Bengal\\ INDIA}
	\email{senet.jeet@gmail.com}

\thanks{The research of  Arpita Mal is supported by UGC, Govt. of India.   The research of Jeet Sen is supported by CSIR, Govt. of India.} 

\subjclass[2010]{Primary 47A12, 15A60, Secondary 47L05}
\keywords{Birkhoff-James orthogonality; numerical radius;  operator matrix.}

\begin{abstract}
 We completely characterize Birkhoff-James orthogonality with respect to numerical radius norm in the space of bounded linear operators on a complex Hilbert space.  As applications of the  results obtained, we estimate lower bounds of numerical radius for $n\times n$ operator matrices, which improve on and generalize existing lower bounds. We also obtain a better lower bound of numerical radius for an upper triangular operator matrix. 
\end{abstract}

\maketitle

\section{Introduction.} 
 The usual notion of orthogonality on an inner product space has been generalized on a Banach space by several mathematicians in various ways because of its importance in the study of geometry of Banach space. Birkhoff-James orthogonality \cite{B,J} is one of the most important notion of orthogonality among all others. Numerical radius of a bounded linear operator on a complex Hilbert space has also been studied extensively over the years. The purpose of this paper is to explore the connection between Birkhoff-James orthogonality and numerical radius norm of  bounded linear operators on a complex Hilbert space. As an application of the results obtained, we develop some bounds of the numerical radius for a bounded linear operator, which improve on  existing lower bounds of numerical radius. Before proceeding further, we announce the notations and terminologies to be used throughout the paper.\\

Let $\mathbb{H}$ denote a Hilbert space over the field $K,$ where $K\in \{\mathbb{R},\mathbb{C}\}.$ Let $B_{\mathbb{H}}$ and $S_{\mathbb{H}}$ denote the unit ball and the unit sphere of $\mathbb{H}$ respectively, i.e., $B_{\mathbb{H}}=\{x\in \mathbb{H}:\|x\|\leq 1\}$ and $S_{\mathbb{H}}=\{x\in \mathbb{H}:\|x\|=1\}.$ Let $\mathbb{B}(\mathbb{H})$ and $\mathbb{K}(\mathbb{H})$ denote the space of all bounded and compact linear operators on $\mathbb{H}$ respectively. For $x,y\in \mathbb{H},~x\otimes y$ denotes the rank one operator in $\mathbb{B}(H),$ defined by $(x\otimes y)(z)=\langle z,y\rangle x$ for all $z\in \mathbb{H}.$ An operator $T\in \mathbb{B}(\mathbb{H})$ can be represented as $T=H+iK,$ where $H=\frac{1}{2}(T+T^*),$ the real part of $T$ and $K=\frac{1}{2i}(T-T^*),$ the imaginary part of $T.$ For $T\in \mathbb{B}(\mathbb{H}),$ numerical radius $w(T)$, Crawford number $c(T)$ and minimum modulus $m(T)$  of $T$ are defined respectively as follows:
 \[w(T)=\sup\{|\langle Tx,x\rangle|:\|x\|=1\},\]
\[c(T)=\inf\{|\langle Tx,x\rangle|:\|x\|=1\} ~\text{and}\]
\[m(T)=\inf\{\| Tx\|:\|x\|=1\}.\]
Observe that if $\mathbb{H}$ is a complex Hilbert space, then the numerical radius $w(.)$ defines a norm on $\mathbb{B}(\mathbb{H})$ which is equivalent to the operator norm. In fact, for any $T\in \mathbb{B}(\mathbb{H}),$ $\frac{1}{2}\|T\|\leq w(T)\leq \|T\|.$ This inequality is sharp. If $T^2=0,$ then $w(T)=\frac{1}{2}\|T\|$ and if $T$ is self adjoint, then $w(T)=\|T\|.$ If $\mathbb{H}$ is a real Hilbert space, then the numerical radius $w(.)$ is not necessarily a norm on $\mathbb{B}(\mathbb{H}),$ in fact, it is a pseudo-norm. For $T\in \mathbb{B}(\mathbb{H}),$ let us denote the set of all numerical radius attaining vectors by $M_{w(T)}$ and the set of all norm attaining vectors by $M_T,$ i.e.,  
\[M_{w(T)}=\{x\in S_{\mathbb{H}}:|\langle Tx,x \rangle |=w(T)\}~\text{and}~M_T=\{x\in S_{\mathbb{H}}:\|Tx\|=\|T\|\}.\]
In a Banach space $\mathbb{X},$ Birkhoff-James orthogonality is defined in the following way.\\
For $x,y\in \mathbb{X},~x$ is said to be Birkhoff-James orthogonal to $y,$ written as $x\perp_B y,$ if $\|x+\lambda y\|\geq \|x\|$ for all $\lambda \in K.$ Recently, many authors have studied orthogonality on $\mathbb{B}(\mathbb{H})$ with respect to different norms \cite{BS,G,MSP,PSG,SP}. Motivated by these, we study  ``numerical radius orthogonality" on $\mathbb{B}(\mathbb{H}).$ 
\begin{definition}
	For $T,A\in \mathbb{B}(\mathbb{H}),$ we say that $T$ is numerical radius orthogonal to $A,$ written as $``T\perp_w A",$ if $w(T+\lambda A)\geq w(T)$ for all $\lambda \in \mathbb{C}.$
\end{definition}
 Although numerical radius of operators is not a norm on real Hilbert space, we can define  ``numerical radius orthogonality" on $\mathbb{B}(\mathbb{H}),$ where $\mathbb{H}$ is a real Hilbert space, as follows:
\begin{definition}
	For $T,A\in \mathbb{B}(\mathbb{H}),$ we say that $T$ is numerical radius orthogonal to $A,$ written as $``T\perp_w A",$ if $w(T+\lambda A)\geq w(T)$ for all $\lambda \in \mathbb{R}.$
\end{definition}
  In section 2, we characterize numerical radius orthogonality for operators on complex as well as real Hilbert spaces. In section 3, using the results obtained in section 2, we estimate lower bounds of numerical radius for $n\times n$ operator matrices, which improve on and generalize existing lower bounds. Finally, we give numerical examples to show that the bounds obtained by us are better than the existing ones. 

\section{Numerical radius orthogonality}

We begin this section with an easy proposition which follows from the definition of numerical radius orthogonality of operators.
\begin{prop}
	Let $T,A\in \mathbb{B}(\mathbb{H}).$ Then the following conditions are equivalent.\\
	(i) $T\perp_w A$\\
	(ii) $T^*\perp_w A^*$\\
	(iii) $\alpha T\perp_w \beta A$ for all $\alpha,\beta \in K\setminus \{0\}. $
\end{prop}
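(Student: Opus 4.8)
The plan is to reduce everything to two elementary properties of the numerical radius: its invariance under adjoints, $w(T^*)=w(T)$, and its absolute homogeneity, $w(\mu T)=|\mu|\,w(T)$ for every scalar $\mu$. The first follows from $\langle T^*x,x\rangle=\overline{\langle Tx,x\rangle}$, so that the moduli agree vector by vector; the second is immediate from the definition. With these in hand, each implication is a one-line manipulation of the defining inequality $w(T+\lambda A)\geq w(T)$, and I would organize the argument as the cycle (i)$\Rightarrow$(ii)$\Rightarrow$(i) together with (i)$\Leftrightarrow$(iii).

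For (i)$\Rightarrow$(ii), I would start from $w(T+\lambda A)\geq w(T)$ for all scalars $\lambda$ and use the identity $(T+\bar\lambda A)^{*}=T^{*}+\lambda A^{*}$. Applying adjoint-invariance twice yields
\[
w(T^{*}+\lambda A^{*})=w\bigl((T+\bar\lambda A)^{*}\bigr)=w(T+\bar\lambda A)\geq w(T)=w(T^{*}).
\]
The point to check is that as $\lambda$ ranges over the scalar field, so does $\bar\lambda$; this holds in the complex case since conjugation is a bijection of $\mathbb{C}$, and trivially in the real case where $\bar\lambda=\lambda$. Hence $T^{*}\perp_w A^{*}$. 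Since $(T^{*})^{*}=T$ and $(A^{*})^{*}=A$, the reverse implication (ii)$\Rightarrow$(i) is the same computation applied to the pair $T^{*},A^{*}$, giving the equivalence.

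For (i)$\Leftrightarrow$(iii), the implication (iii)$\Rightarrow$(i) is obtained by setting $\alpha=\beta=1$. For (i)$\Rightarrow$(iii), fix nonzero scalars $\alpha,\beta$ and an arbitrary $\lambda$. By homogeneity,
\[
w(\alpha T+\lambda\beta A)=|\alpha|\,w\!\left(T+\tfrac{\lambda\beta}{\alpha}A\right)\geq|\alpha|\,w(T)=w(\alpha T),
\]
where the inequality uses (i) with the substituted scalar $\tfrac{\lambda\beta}{\alpha}$. Because $\alpha,\beta\neq0$, the map $\lambda\mapsto \tfrac{\lambda\beta}{\alpha}$ is a bijection of the scalar field, so (i) may indeed be invoked for this value; thus $\alpha T\perp_w\beta A$ for all such $\alpha,\beta$.

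I do not anticipate a genuine obstacle here, as the proposition is essentially a bookkeeping consequence of the two stated properties of $w(\cdot)$. The only place demanding a moment's care is the conjugation step in (i)$\Rightarrow$(ii): one must record that the adjoint introduces $\bar\lambda$ rather than $\lambda$ and that the admissible set of multipliers is stable under conjugation, which is precisely why the statement is framed separately for the complex and real definitions of $\perp_w$ given above.
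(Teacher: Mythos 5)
Your proof is correct, and it is exactly the routine verification the paper has in mind: the authors state this proposition without proof, remarking that it ``follows from the definition,'' and your argument via $w(T^{*})=w(T)$, absolute homogeneity of $w(\cdot)$, and the identity $(T+\bar\lambda A)^{*}=T^{*}+\lambda A^{*}$ is the standard way to fill in those details. Your attention to the conjugation of $\lambda$ and to the real versus complex scalar field is careful and accurate; nothing is missing.
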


In the next proposition, we obtain a connection between numerical radius orthogonality and Birkhoff-James orthogonality for self-adjoint and nilpotent operators on a complex Hilbert space.

\begin{prop}
	Let $\mathbb{H}$ be a complex Hilbert space and $T,A\in \mathbb{B}(\mathbb{H}).$ Then the following conditions hold:\\
	(i) If $T=T^*,$ then $T\perp_w A\Rightarrow T\perp_B A.$\\
	(ii) If $T^2=0,$ then $T\perp_BA\Rightarrow T\perp_w A.$ 
\end{prop}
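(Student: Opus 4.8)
The plan is to exploit the two extremal cases of the equivalence $\frac{1}{2}\|S\| \leq w(S) \leq \|S\|$ recalled in the introduction, together with the fact that these extremes are attained precisely for the operators under consideration. In both implications the strategy is the same: squeeze the quantity $w(T + \lambda A)$ between the numerical radius and the operator norm of $T + \lambda A$, and feed in whichever hypothesis is available.

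For part (i), I would begin from the fact that a self-adjoint operator satisfies $w(T) = \|T\|$. Given $T \perp_w A$, for every $\lambda \in \mathbb{C}$ the hypothesis yields $w(T + \lambda A) \geq w(T) = \|T\|$. Applying the universal bound $w(S) \leq \|S\|$ to $S = T + \lambda A$ then gives $\|T + \lambda A\| \geq w(T + \lambda A) \geq \|T\|$, which is exactly the statement $T \perp_B A$.

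For part (ii), I would use the dual extremal fact: when $T^2 = 0$ one has $w(T) = \frac{1}{2}\|T\|$. Given $T \perp_B A$, for every $\lambda \in \mathbb{C}$ the orthogonality gives $\|T + \lambda A\| \geq \|T\|$. Combining this with the universal lower bound $w(S) \geq \frac{1}{2}\|S\|$ applied to $S = T + \lambda A$ produces $w(T + \lambda A) \geq \frac{1}{2}\|T + \lambda A\| \geq \frac{1}{2}\|T\| = w(T)$, which is precisely $T \perp_w A$.

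There is little genuine obstacle here; the entire content lies in matching each hypothesis to the correct side of the norm-equivalence inequality. The only point requiring care is that the equalities $w(T) = \|T\|$ (self-adjoint case) and $w(T) = \frac{1}{2}\|T\|$ (square-zero case) are used for $T$ itself, whereas the companion inequality is applied to the perturbed operator $T + \lambda A$, which need not lie in either special class. This asymmetry is exactly what forces self-adjointness to give the implication $\perp_w \Rightarrow \perp_B$ while the square-zero condition gives the reverse implication $\perp_B \Rightarrow \perp_w$.
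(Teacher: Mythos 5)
Your proposal is correct and follows essentially the same argument as the paper: use $w(T)=\|T\|$ for self-adjoint $T$ together with $w(S)\leq\|S\|$ applied to $S=T+\lambda A$ in part (i), and $w(T)=\frac{1}{2}\|T\|$ for $T^2=0$ together with $w(S)\geq\frac{1}{2}\|S\|$ in part (ii). Your closing observation about the asymmetry --- the extremal equality holds for $T$ alone while only the one-sided universal bound is available for $T+\lambda A$ --- is a nice articulation of why each hypothesis yields its particular direction of implication.
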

\begin{proof}
	(i) If $T=T^*,$ then $w(T)=\|T\|.$ Now, $T\perp_wA\Rightarrow w(T+\lambda A)\geq w(T)$ for all $\lambda \in \mathbb{C}.$ Hence, $\|T+\lambda A\|\geq w(T+\lambda A)\geq w(T)=\|T\|$ for all $\lambda \in \mathbb{C}.$ Thus, $T\perp_B A.$\\
	(ii) If $ T^2=0,$ then $w(T)=\frac{1}{2}\|T\|.$ Let $T\perp_B A.$ Then $\|T+\lambda A\|\geq \|T\|$ for all $\lambda \in \mathbb{C}.$ Hence, $w(T+\lambda A)\geq \frac{1}{2}\|T+\lambda A\|\geq \frac{1}{2}\|T\|=w(T)$ for all $\lambda \in \mathbb{C}.$ Thus, $T\perp_w A.$ 
\end{proof}

\begin{remark}
In general, these two notions of orthogonality $``T\perp_wA"$ and $``T\perp_BA"$ are not equivalent. As for example, if we consider 
$
T=\begin{bmatrix}
0&1\\
0&0
\end{bmatrix},
A=\begin{bmatrix}
1&1\\
0&2
\end{bmatrix},
$
then $T\perp_w A$ but $T\not\perp_B A.$ Again, if we consider 
$
T=\begin{bmatrix}
1&0\\
i&1
\end{bmatrix},
A=\begin{bmatrix}
i &\frac{\sqrt{5}+1}{2}\\
0&0
\end{bmatrix},
$
then $T\perp_B A$ but $T\not\perp_w A.$
\end{remark}

In the following theorem, we prove the main result of this section, which  characterizes numerical radius orthogonality of bounded operators on complex Hilbert space.

\begin{theorem}\label{th-001}
	Let $\mathbb{H}$ be a complex Hilbert space and $T,A\in \mathbb{B}(\mathbb{H}).$ Then $T\perp_w A$ if and only if for each $\theta \in [0,2\pi),$ there exists a sequence $\{x_n^{\theta}\}$ in $ S_{\mathbb{H}}$ such that the following two conditions hold:\\
	(i) $\lim_{n\to \infty}|\langle Tx_n^{\theta},x_n^{\theta}\rangle|=w(T),$\\ 
	(ii) $\lim_{n\to\infty}Re\{e^{-i\theta}\langle Tx_n^{\theta},x_n^{\theta}\rangle \overline{\langle Ax_n^{\theta},x_n^{\theta}\rangle}\}\geq 0.$
\end{theorem}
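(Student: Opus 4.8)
The plan is to prove the two implications separately, built around the elementary identity
\[
|t+\lambda a|^2 = |t|^2 + 2\operatorname{Re}(\lambda a\overline{t}) + |\lambda|^2|a|^2,
\]
applied with $t=\langle Tx,x\rangle$ and $a=\langle Ax,x\rangle$ for a unit vector $x$. The point that dictates the whole shape of the statement is that for $\lambda=r e^{i\theta}$ with $r>0$ one has $\operatorname{Re}(\lambda a\overline t)=r\,\operatorname{Re}(e^{-i\theta}t\overline a)$, and $\operatorname{Re}(e^{-i\theta}t\overline a)=\operatorname{Re}\{e^{-i\theta}\langle Tx,x\rangle\overline{\langle Ax,x\rangle}\}$ is exactly the quantity in condition (ii). Thus a perturbation direction $\lambda$ is matched to the phase $\theta=\arg\lambda$, which explains why the criterion must be checked for every $\theta\in[0,2\pi)$.

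For the sufficiency (``if'') direction I would fix an arbitrary $\lambda\in\mathbb C\setminus\{0\}$, put $\theta=\arg\lambda$, and substitute the given sequence $\{x_n^\theta\}$ into the identity. Writing $t_n,a_n$ for the two inner products at $x_n^\theta$ and discarding the nonnegative term $|\lambda|^2|a_n|^2$, conditions (i) and (ii) give $\liminf_n|t_n+\lambda a_n|^2 \ge w(T)^2 + 2|\lambda|\cdot(\text{nonnegative limit})\ge w(T)^2$. Since $w(T+\lambda A)\ge |t_n+\lambda a_n|$ for every $n$, this forces $w(T+\lambda A)\ge w(T)$; the case $\lambda=0$ is trivial, so $T\perp_w A$.

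The substantial direction is necessity, which I would establish by contrapositive. Assume the sequence condition fails for some fixed $\theta$. Because the numerical radius need not be attained, the correct objects are near-maximizers, and unwinding the failure yields constants $\delta_0,c>0$ such that every unit vector $x$ with $|\langle Tx,x\rangle|\ge w(T)-\delta_0$ satisfies $\operatorname{Re}\{e^{-i\theta}\langle Tx,x\rangle\overline{\langle Ax,x\rangle}\}\le -c$. I would then perturb by $\lambda=\varepsilon e^{i\theta}$ with $\varepsilon>0$ small and split $S_{\mathbb H}$ into the near-maximizing set $\{x:|\langle Tx,x\rangle|\ge w(T)-\delta_0\}$ and its complement. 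On the near-maximizing set the cross term $2\varepsilon\operatorname{Re}\{e^{-i\theta}t\overline a\}\le -2\varepsilon c$ dominates the error $\varepsilon^2|a|^2\le\varepsilon^2 w(A)^2$ once $\varepsilon<2c/w(A)^2$, pushing $|t+\lambda a|$ strictly below $w(T)$; on the complement $|t+\lambda a|\le|t|+\varepsilon w(A)<w(T)$ once $\varepsilon<\delta_0/w(A)$. Choosing $\varepsilon$ below both thresholds makes $\sup_x|t+\lambda a|$ uniformly strictly smaller than $w(T)$, i.e. $w(T+\lambda A)<w(T)$, contradicting $T\perp_w A$. One should first dispose of the trivial case $A=0$, where $w(A)=0$ and both conditions hold automatically, so that $w(A)>0$ may be assumed.

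The main obstacle is precisely this uniform estimate: since $w(\cdot)$ is a supremum over all of $S_{\mathbb H}$, beating it requires the gap $w(T)-|t+\lambda a|$ to stay bounded away from $0$ as $x$ ranges over the whole sphere. The two-region split is what secures this—the hypothesis $\operatorname{Re}\{e^{-i\theta}t\overline a\}\le -c$ supplies a uniform gap on the dangerous near-maximizing region, while the $\delta_0$-margin guarantees a gap on the complement. Converting the ``fails for some $\theta$'' hypothesis into the clean pair $(\delta_0,c)$ and keeping track of the two smallness thresholds on $\varepsilon$ are the only places that demand genuine care; the remaining steps are routine applications of the displayed identity.
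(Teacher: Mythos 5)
Your proposal is correct, and while your sufficiency argument coincides with the paper's (the same expansion of $|t+\lambda a|^2$ with $\theta=\arg\lambda$, discarding the nonnegative quadratic term and passing to the limit), your necessity argument takes a genuinely different route. The paper proves necessity \emph{directly}: from $w(T+\frac{e^{i\theta}}{n}A)\geq w(T)>w(T)-\frac{1}{n^2}$ it selects near-maximizing unit vectors $x_n^{\theta}$ for the \emph{perturbed} operator, expands the square, multiplies through by $n/2$, and exploits the sign of $w(T)^2-|\langle Tx_n^{\theta},x_n^{\theta}\rangle|^2\geq 0$ to isolate the cross term; condition (i) then follows from the triangle inequality, and a bounded-subsequence extraction finishes both limits. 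You instead argue by contrapositive: you convert failure of the criterion at a fixed $\theta$ into uniform constants $\delta_0,c>0$ (this unwinding is valid -- take $\delta=c=1/n$, obtain $x_n$ with $|\langle Tx_n,x_n\rangle|\geq w(T)-\frac1n$ and $\mathrm{Re}\{e^{-i\theta}\langle Tx_n,x_n\rangle\overline{\langle Ax_n,x_n\rangle}\}>-\frac1n$, and pass to a subsequence along which the real parts converge, contradicting the assumed failure), then split $S_{\mathbb H}$ into near-maximizers and the complement and choose $\varepsilon<\min\{2c/w(A)^2,\,\delta_0/w(A)\}$ to force $w(T+\varepsilon e^{i\theta}A)<w(T)$ with a uniform gap on each region, contradicting $T\perp_w A$. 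Both arguments are complete. The paper's construction is shorter and produces the sequence $\{x_n^{\theta}\}$ explicitly; your two-region argument is quantitative, showing that when the criterion fails in direction $\theta$ the numerical radius strictly \emph{decreases} under every sufficiently small perturbation $\varepsilon e^{i\theta}A$, at the cost of the quantifier bookkeeping you rightly flag. In a final write-up you should spell out the extraction of $(\delta_0,c)$, including the subsequence step ensuring the limit in (ii) exists, and note that on a complex Hilbert space $w(A)=0$ implies $A=0$, which justifies your reduction to $w(A)>0$.
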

\begin{proof}
	We first prove the sufficient part of the theorem. Let $\lambda \in \mathbb{C}.$ Then $\lambda =|\lambda|e^{i\theta}$ for some $\theta \in [0,2\pi).$ By hypothesis, there exists  a sequence $\{x_n^{\theta}\}$ in $ S_{\mathbb{H}}$ such that $(i)$ and $(ii)$ hold.  Therefore,
	\begin{eqnarray*}
		w(T+\lambda A)^2&\geq &\lim_{n\to\infty}|\langle Tx_n^{\theta}+\lambda Ax_n^{\theta},x_n^{\theta}\rangle|^2\\
		&=& \lim_{n\to \infty}[|\langle Tx_n^{\theta},x_n^{\theta}\rangle |^2+2|\lambda|Re\{e^{-i\theta} \langle Tx_n^{\theta},x_n^{\theta}\rangle \overline{\langle Ax_n^{\theta},x_n^{\theta}\rangle} \} +\\
		&&|\lambda|^2 |\langle Ax_n^{\theta},x_n^{\theta}\rangle |^2]	\\
		&\geq&\lim_{n\to\infty}|\langle Tx_n^{\theta},x_n^{\theta}\rangle |^2\\
		&=& w(T)^2.                
	\end{eqnarray*}
	Thus, for every $\lambda \in \mathbb{C},~w(T+\lambda A)\geq w(T).$ Hence, $T\perp_wA.$\\
	Now, we prove the necessary part of the theorem. Let $T\perp_wA.$ Then for every $\lambda \in \mathbb{C},~w(T+\lambda A)\geq w(T).$ Let $\theta \in [0,2\pi).$ Then $w(T+\frac{e^{i\theta}}{n} A)\geq w(T)>w(T)-\frac{1}{n^2}$ for all $n\in \mathbb{N}.$ Therefore, for each $n\in \mathbb{N},$ there exists $x_n^{\theta}\in S_{\mathbb{H}}$ such that
	 
	 \begin{eqnarray}
	 |\langle Tx_n^{\theta}+\frac{e^{i\theta}}{n} Ax_n^{\theta},x_n^{\theta}\rangle|>w(T)-\frac{1}{n^2}.
	 \end{eqnarray}
	 Hence, for all   $n\in \mathbb{N},$
	\begin{eqnarray*}
			(w(T)-\frac{1}{n^2})^2&<&  |\langle Tx_n^{\theta}+\frac{e^{i\theta}}{n} Ax_n^{\theta},x_n^{\theta}\rangle|^2\\
		\Rightarrow w(T)^2-\frac{2}{n^2}w(T)+\frac{1}{n^4}&<& |\langle Tx_n^{\theta},x_n^{\theta}\rangle |^2+\frac{1}{n^2} |\langle Ax_n^{\theta},x_n^{\theta}\rangle |^2  \\
		&& + \frac{2}{n}Re\{e^{-i\theta} \langle Tx_n^{\theta},x_n^{\theta}\rangle \overline{\langle Ax_n^{\theta},x_n^{\theta}\rangle} \}\\
		\Rightarrow \frac{n}{2} [w(T)^2-|\langle Tx_n^{\theta},x_n^{\theta}\rangle |^2] &<& \frac{1}{n}w(T)-\frac{1}{2n^3} +\frac{1}{2n} |\langle Ax_n^{\theta},x_n^{\theta}\rangle |^2\\
		&&+ Re\{e^{-i\theta} \langle Tx_n^{\theta},x_n^{\theta}\rangle \overline{\langle Ax_n^{\theta},x_n^{\theta}\rangle} \}\\
		\Rightarrow 0&<&\frac{1}{n}w(T)-\frac{1}{2n^3} +\frac{1}{2n}\|A\|^2 \\
		&&+Re\{e^{-i\theta} \langle Tx_n^{\theta},x_n^{\theta}\rangle \overline{\langle Ax_n^{\theta},x_n^{\theta}\rangle} \}.
	\end{eqnarray*}
    Now, since $\{\langle Tx_n^{\theta},x_n^{\theta}\rangle\}$ and $\{\langle Ax_n^{\theta},x_n^{\theta}\rangle\}$ are bounded sequences of complex numbers, if necessary, passing through a subsequence and taking limit $n\to\infty$ in the last inequality, we get $0\leq  \lim_{n\to\infty}Re\{e^{-i\theta} \langle Tx_n^{\theta},x_n^{\theta}\rangle \overline{\langle Ax_n^{\theta},x_n^{\theta}\rangle} \}.$ This proves $(ii).$\\
     Now, we prove $(i).$ Using $(1),$ we get $ |\langle Tx_n^{\theta},x_n^{\theta}\rangle|\geq|\langle Tx_n^{\theta}+\frac{e^{i\theta}}{n} Ax_n^{\theta},x_n^{\theta}\rangle|-\frac{1}{n}|\langle Ax_n^{\theta},x_n^{\theta}\rangle|\\
		>w(T)-\frac{1}{n^2}-\frac{1}{n}\|A\|.$ Once again, taking limit $n\to\infty,$ we get $\lim_{n\to\infty} |\langle Tx_n^{\theta},x_n^{\theta}\rangle|\geq w(T).$ Clearly, $w(T)\geq\lim_{n\to\infty} |\langle Tx_n^{\theta},x_n^{\theta}\rangle|.$ Thus, $(i)$ holds. This completes the proof of the theorem. 
    \end{proof}
    
   If we consider compact operators instead of bounded operators then we get the following theorem.

    \begin{theorem}\label{th-01}
    	Let $\mathbb{H}$ be a complex Hilbert space and $T,A\in \mathbb{K}(\mathbb{H}).$ Then $T\perp_w A$ if and only if for each $\theta \in [0,2\pi),$ there exists $x_{\theta}\in M_{w(T)}$ such that $$Re\{e^{-i\theta}\langle Tx_\theta,x_\theta\rangle \overline{\langle Ax_\theta,x_\theta\rangle}\}\geq 0.$$
    \end{theorem}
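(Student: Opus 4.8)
The plan is to deduce this from Theorem \ref{th-001} by using compactness of $T$ and $A$ to upgrade the approximating sequence into a genuine numerical-radius-attaining vector. The sufficiency direction is immediate: given, for each $\theta$, a vector $x_\theta \in M_{w(T)}$ with $Re\{e^{-i\theta}\langle Tx_\theta,x_\theta\rangle \overline{\langle Ax_\theta,x_\theta\rangle}\}\geq 0$, I would feed the constant sequence $x_n^\theta = x_\theta$ into Theorem \ref{th-001}. Condition (i) there holds because $x_\theta \in M_{w(T)}$ means $|\langle Tx_\theta,x_\theta\rangle| = w(T)$, and condition (ii) holds because the limit of a constant sequence equals its value. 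Hence $T\perp_w A$.

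For necessity, I would first record the key analytic fact that for a compact operator $C$ the quadratic form $x\mapsto \langle Cx,x\rangle$ is weakly sequentially continuous on $B_{\mathbb{H}}$: if $x_n \rightharpoonup x$ weakly, then $Cx_n \to Cx$ in norm, and writing $\langle Cx_n,x_n\rangle - \langle Cx,x\rangle = \langle Cx_n - Cx, x_n\rangle + \langle Cx, x_n - x\rangle$ shows that both terms vanish (the first by norm convergence and boundedness of $\{x_n\}$, the second by weak convergence). Now assume $T\perp_w A$ and fix $\theta$. If $w(T)=0$ then $T=0$ and the required inequality reduces to $Re\{0\}\geq 0$, which holds for any unit vector; so I may assume $w(T)>0$. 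By Theorem \ref{th-001} there is a sequence $\{x_n^\theta\}$ in $S_{\mathbb{H}}$ satisfying (i) and (ii). Since $B_{\mathbb{H}}$ is weakly sequentially compact, I would pass to a subsequence with $x_n^\theta \rightharpoonup x_\theta$ for some $x_\theta \in B_{\mathbb{H}}$.

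Applying the weak-continuity fact to both $T$ and $A$ gives $\langle Tx_n^\theta,x_n^\theta\rangle \to \langle Tx_\theta,x_\theta\rangle$ and $\langle Ax_n^\theta,x_n^\theta\rangle \to \langle Ax_\theta,x_\theta\rangle$. From (i) this yields $|\langle Tx_\theta,x_\theta\rangle| = w(T)$, and from (ii), passing to the limit inside the (continuous) expression, $Re\{e^{-i\theta}\langle Tx_\theta,x_\theta\rangle \overline{\langle Ax_\theta,x_\theta\rangle}\}\geq 0$. It remains only to check that $x_\theta \in S_{\mathbb{H}}$.

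The main obstacle is precisely this last point: verifying that the weak limit is a unit vector and not merely a vector of norm strictly less than $1$. Here I would argue by contradiction. Weak lower semicontinuity of the norm gives $\|x_\theta\|\leq 1$, and if $\|x_\theta\| = c < 1$ (note $c>0$ since $|\langle Tx_\theta,x_\theta\rangle| = w(T)>0$), then the normalized vector $x_\theta/c$ satisfies $|\langle T(x_\theta/c), x_\theta/c\rangle| = w(T)/c^2 > w(T)$, contradicting the definition of $w(T)$. Therefore $\|x_\theta\|=1$, so $x_\theta \in M_{w(T)}$, and together with the inequality established above this completes the proof.
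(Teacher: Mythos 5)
Your proof is correct and follows essentially the same route as the paper's: both deduce the result from Theorem \ref{th-001} by extracting a weakly convergent subsequence from the approximating sequence and using compactness of $T$ and $A$ to pass the quadratic forms $\langle Tx_n^\theta,x_n^\theta\rangle$ and $\langle Ax_n^\theta,x_n^\theta\rangle$ to the limit. If anything, you are more careful than the paper, which asserts $x_\theta\in M_{w(T)}$ without verifying $\|x_\theta\|=1$; your normalization argument when $w(T)>0$ and your separate treatment of the case $w(T)=0$ (where, on a complex space, $w$ is a norm and hence $T=0$) close that small gap.
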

    
    \begin{proof}
    	By Theorem \ref{th-001}, $T\perp_wA$ if and only if  for each $\theta \in [0,2\pi),$ there exists a sequence $\{x_n^{\theta}\}$ in $ S_{\mathbb{H}}$ such that the following two conditions hold:\\
    	(i) $\lim_{n\to \infty}|\langle Tx_n^{\theta},x_n^{\theta}\rangle|=w(T),$\\ 
    	(ii) $\lim_{n\to\infty}Re\{e^{-i\theta}\langle Tx_n^{\theta},x_n^{\theta}\rangle \overline{\langle Ax_n^{\theta},x_n^{\theta}\rangle}\}\geq 0.$\\
	Since every Hilbert space is reflexive, $B_{\mathbb{H}}$ is weakly compact. So without loss of generality, we may assume that for each $\theta \in[0,2\pi),$ there exists some $x_\theta \in B_{\mathbb{H}}$ such that $\{x_n^{\theta}\}$ weakly converges to $x_\theta.$ Now, $T,A$ being compact, $\lim_{n\to\infty}Tx_n^{\theta}=Tx_{\theta}$ and $\lim_{n\to\infty}Ax_n^{\theta}=Ax_{\theta}.$ Thus, $\lim_{n\to\infty}\langle Tx_n^{\theta},x_n^{\theta}\rangle = \langle Tx_\theta,x_\theta\rangle$ and $\lim_{n\to\infty}\langle Ax_n^{\theta},x_n^{\theta}\rangle=\langle Ax_\theta,x_\theta\rangle$. Now, taking limit $n\to \infty$ in $(i)$ and $(ii)$ we obtain, $x_{\theta}\in M_{w(T)}$ and $ Re\{ e^{-i\theta}\langle Tx_\theta,x_\theta\rangle \overline{\langle Ax_\theta,x_\theta\rangle}\}\geq 0.$ This completes the proof of the theorem. 
\end{proof}

In the following two theorems, we state characterizations of numerical radius orthogonality for bounded and compact operators on real Hilbert space, the proofs of which follow from Theorem \ref{th-001} and Theorem \ref{th-01}.
\begin{theorem}\label{th-002}
	Let $\mathbb{H}$ be a real Hilbert space and $T,A\in \mathbb{B}(\mathbb{H}).$ Then $T\perp_w A$ if and only if there exist sequences $\{x_n\},\{y_n\}$ in $ S_{\mathbb{H}}$ such that the following conditions hold:\\
	(i) $w(T)=\lim_{n\to\infty}|\langle Tx_n,x_n\rangle|=\lim_{n\to\infty}|\langle Ty_n,y_n\rangle|,$\\
	 (ii) $\lim_{n\to\infty}\langle Tx_n,x_n\rangle \langle Ax_n,x_n\rangle \geq 0,$ \\ (iii) $\lim_{n\to\infty}\langle Ty_n,y_n\rangle \langle Ay_n,y_n\rangle \leq 0.$ 
\end{theorem}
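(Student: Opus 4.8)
The plan is to run the argument of Theorem \ref{th-001} essentially verbatim, but to exploit the fact that in the real case the scalar $\lambda$ ranges only over $\mathbb{R}$, so that the family of ``directions'' $\theta \in [0,2\pi)$ collapses to the two values $\theta = 0$ and $\theta = \pi$, i.e. to the two signs of $\lambda$. Accordingly, the single $\theta$-indexed sequence of Theorem \ref{th-001} splits into the two sequences $\{x_n\}$ (which will govern the range $\lambda \geq 0$) and $\{y_n\}$ (which will govern $\lambda \leq 0$). Throughout I would use that on a real Hilbert space every $\langle Tx,x\rangle$ is a real number, so that the quantity $Re\{e^{-i\theta}\langle Tx,x\rangle\overline{\langle Ax,x\rangle}\}$ of Theorem \ref{th-001} reduces to $\pm\langle Tx,x\rangle\langle Ax,x\rangle$ for $\theta \in \{0,\pi\}$, which is exactly what appears in conditions (ii) and (iii).

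For sufficiency, I would fix $\lambda \in \mathbb{R}$ and treat $\lambda \geq 0$ and $\lambda \leq 0$ separately. For $\lambda \geq 0$, using that $w(T+\lambda A) \geq |\langle (T+\lambda A)x_n,x_n\rangle|$ for every $n$, I would expand
\[
|\langle (T+\lambda A)x_n,x_n\rangle|^2 = \langle Tx_n,x_n\rangle^2 + 2\lambda\langle Tx_n,x_n\rangle\langle Ax_n,x_n\rangle + \lambda^2\langle Ax_n,x_n\rangle^2
\]
and pass to the limit inferior: the first term tends to $w(T)^2$ by (i), the cross term is non-negative in the limit by (ii) (recall $\lambda \geq 0$), and the last term is non-negative. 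Since $w(T+\lambda A)^2$ dominates this expression for every $n$, it dominates the limit inferior, giving $w(T+\lambda A)\geq w(T)$. For $\lambda \leq 0$ the identical computation with $\{y_n\}$ applies, the cross term now being non-negative because $\lambda \leq 0$ while the limit in (iii) is non-positive. Together the two cases cover all real $\lambda$, so $T\perp_w A$.

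For necessity, assuming $T\perp_w A$, I would feed the two test scalars $\lambda = \tfrac{1}{n}$ and $\lambda = -\tfrac{1}{n}$ into $w(T+\lambda A)\geq w(T) > w(T)-\tfrac{1}{n^2}$. The choice $\lambda = \tfrac{1}{n}$ produces, for each $n$, a unit vector $x_n$ with $|\langle (T+\tfrac{1}{n}A)x_n,x_n\rangle| > w(T)-\tfrac{1}{n^2}$; squaring, rearranging exactly as in the proof of Theorem \ref{th-001}, bounding $|\langle Ax_n,x_n\rangle|\leq \|A\|$, and passing to a subsequence (the scalar sequences $\langle Tx_n,x_n\rangle$ and $\langle Ax_n,x_n\rangle$ being bounded) yields $\lim_n \langle Tx_n,x_n\rangle\langle Ax_n,x_n\rangle \geq 0$, which is (ii), together with $\lim_n|\langle Tx_n,x_n\rangle| = w(T)$, the first half of (i). The symmetric choice $\lambda = -\tfrac{1}{n}$ produces $\{y_n\}$ satisfying (iii) and the second half of (i).

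I do not anticipate a genuine obstacle: the content is a transcription of Theorem \ref{th-001} to the real scalar field, and the only points requiring care are bookkeeping ones --- recording that the two directions $\theta=0,\pi$ suffice, keeping the sign of the cross term aligned with the sign of $\lambda$, and remembering to pass to subsequences when extracting the limits in (ii) and (iii).
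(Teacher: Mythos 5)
Your proposal is correct and is essentially the paper's own argument: the paper gives no separate proof of Theorem \ref{th-002}, stating only that it follows from Theorem \ref{th-001}, and your transcription --- collapsing $\theta\in[0,2\pi)$ to the two real directions $\theta=0,\pi$, running the sufficiency expansion for $\lambda\geq 0$ with $\{x_n\}$ and $\lambda\leq 0$ with $\{y_n\}$, and for necessity testing $\lambda=\pm\tfrac{1}{n}$ with the same squaring/rearranging/subsequence extraction --- is exactly the intended adaptation.
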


Similarly, numerical radius orthogonality for compact operators in real Hilbert Hilbert space can be characterized in the following way.

\begin{theorem}\label{th-02}
	Let $\mathbb{H}$ be a real Hilbert space and $T,A\in \mathbb{K}(\mathbb{H}).$ Then $T\perp_w A$ if and only if there exist $x,y\in M_{w(T)}$ such that $\langle Tx,x\rangle \langle Ax,x\rangle \geq 0$ and $\langle Ty,y\rangle \langle Ay,y\rangle \leq 0.$ 
\end{theorem}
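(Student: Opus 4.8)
The plan is to deduce Theorem \ref{th-02} from its bounded-operator analogue, Theorem \ref{th-002}, by the same weak-compactness upgrade that was used to pass from Theorem \ref{th-001} to Theorem \ref{th-01}. Thus the theorem to be proved becomes a ``compactification'' of the already-available real bounded characterization, and no new estimate is needed.

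For the necessity direction I would start from $T\perp_w A$ and invoke Theorem \ref{th-002} to obtain sequences $\{x_n\},\{y_n\}$ in $S_{\mathbb{H}}$ satisfying its conditions (i)--(iii). Since $\mathbb{H}$ is reflexive, $B_{\mathbb{H}}$ is weakly compact, so after passing to subsequences I may assume $x_n\rightharpoonup x$ and $y_n\rightharpoonup y$ for some $x,y\in B_{\mathbb{H}}$. Because $T$ and $A$ are compact, $Tx_n\to Tx$ and $Ax_n\to Ax$ in norm (and likewise along $\{y_n\}$); combining norm convergence of $Tx_n$ with weak convergence of $x_n$ via $\langle Tx_n,x_n\rangle-\langle Tx,x\rangle=\langle Tx_n-Tx,x_n\rangle+\langle Tx,x_n-x\rangle$ gives $\langle Tx_n,x_n\rangle\to\langle Tx,x\rangle$, and similarly $\langle Ax_n,x_n\rangle\to\langle Ax,x\rangle$, $\langle Ty_n,y_n\rangle\to\langle Ty,y\rangle$, $\langle Ay_n,y_n\rangle\to\langle Ay,y\rangle$. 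Passing to the limit in (i)--(iii) then yields $|\langle Tx,x\rangle|=w(T)=|\langle Ty,y\rangle|$ together with the two sign conditions $\langle Tx,x\rangle\langle Ax,x\rangle\geq 0$ and $\langle Ty,y\rangle\langle Ay,y\rangle\leq 0$.

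The one point that needs genuine care, and which I expect to be the only real obstacle, is confirming that the weak limits lie in $S_{\mathbb{H}}$ rather than merely in $B_{\mathbb{H}}$, so that they belong to $M_{w(T)}$. Weak lower semicontinuity of the norm gives $\|x\|\leq 1$; if $w(T)>0$ and $\|x\|<1$, then the unit vector $x/\|x\|$ would satisfy $|\langle T(x/\|x\|),x/\|x\|\rangle|=w(T)/\|x\|^2>w(T)$, contradicting the definition of $w(T)$. Hence $\|x\|=1$ and $x\in M_{w(T)}$, and the same argument gives $y\in M_{w(T)}$. The degenerate case $w(T)=0$ is handled trivially and separately, since then $M_{w(T)}=S_{\mathbb{H}}$ and any unit vector makes both products vanish, so both sign conditions hold automatically.

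For the sufficiency direction the argument is immediate: given $x,y\in M_{w(T)}$ with $\langle Tx,x\rangle\langle Ax,x\rangle\geq 0$ and $\langle Ty,y\rangle\langle Ay,y\rangle\leq 0$, I take the constant sequences $x_n=x$ and $y_n=y$ in $S_{\mathbb{H}}$. Condition (i) of Theorem \ref{th-002} holds because $x,y\in M_{w(T)}$, while conditions (ii) and (iii) hold by the hypothesized sign conditions, so Theorem \ref{th-002} gives $T\perp_w A$. This completes the proof.
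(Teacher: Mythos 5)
Your proof is correct and takes essentially the route the paper intends: the paper gives no explicit proof of Theorem \ref{th-02}, remarking only that it follows from Theorems \ref{th-001} and \ref{th-01}, and your argument --- the real bounded characterization of Theorem \ref{th-002} upgraded via weak compactness of $B_{\mathbb{H}}$ and norm convergence of $Tx_n, Ax_n$ for compact $T,A$ --- is exactly the real analogue of the paper's passage from Theorem \ref{th-001} to Theorem \ref{th-01}. Your normalization step verifying that the weak limits actually lie in $S_{\mathbb{H}}$ when $w(T)>0$ (with the degenerate case $w(T)=0$ handled separately) is a welcome detail that the paper's proof of Theorem \ref{th-01} glosses over.
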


The property that $``T\perp_w A$ if and only if there exists $z\in M_{w(T)}$ such that $\langle Az,z \rangle=0"$ is itself a very interesting one. In the following two corollaries, we obtain  sufficient conditions for this to hold.

\begin{cor}\label{cor-01}
		Let $\mathbb{H}$ be a real Hilbert space, $T,A\in \mathbb{K}(\mathbb{H}),w(T)\neq 0$ and  $M_{w(T)}=D\cup(-D),$ where $D$ is a connected subset of $S_{\mathbb{H}}.$  Then $T\perp_w A$ if and only if there exists $z\in M_{w(T)}$ such that $ \langle Az,z\rangle = 0.$
\end{cor}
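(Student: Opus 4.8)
The plan is to deduce the corollary from the compact real-space characterization in Theorem~\ref{th-02}, combined with an intermediate value argument on the connected set $D$. The sufficiency direction is immediate: if some $z\in M_{w(T)}$ satisfies $\langle Az,z\rangle=0$, then taking $x=y=z$ in Theorem~\ref{th-02} gives $\langle Tz,z\rangle\langle Az,z\rangle=0$, which is simultaneously $\geq 0$ and $\leq 0$, so $T\perp_w A$. Note that this direction uses neither the hypothesis $w(T)\neq 0$ nor the structure of $M_{w(T)}$.

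For necessity, suppose $T\perp_w A$. By Theorem~\ref{th-02} there exist $x,y\in M_{w(T)}$ with $\langle Tx,x\rangle\langle Ax,x\rangle\geq 0$ and $\langle Ty,y\rangle\langle Ay,y\rangle\leq 0$. I would introduce the scalar function $f:S_{\mathbb{H}}\to\mathbb{R}$ defined by $f(u)=\langle Tu,u\rangle\langle Au,u\rangle$; since $T$ and $A$ are bounded, $f$ is continuous on $S_{\mathbb{H}}$ in the norm topology. The key structural observation is that $f$ is even, that is, $f(-u)=f(u)$, because each of the two factors $\langle Tu,u\rangle$ and $\langle Au,u\rangle$ is unchanged under the substitution $u\mapsto -u$.

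Using evenness, I would first arrange that both test vectors lie in the single connected piece $D$: if $x\in -D$, replace $x$ by $-x\in D$ without altering the sign of $f(x)$, and similarly for $y$. This yields $x',y'\in D$ with $f(x')\geq 0$ and $f(y')\leq 0$. Since $D$ is connected and $f$ is continuous, $f(D)$ is a connected subset of $\mathbb{R}$, hence an interval; as it contains a nonnegative value and a nonpositive value, it must contain $0$. Therefore there exists $z\in D\subseteq M_{w(T)}$ with $f(z)=\langle Tz,z\rangle\langle Az,z\rangle=0$. Finally, because $z\in M_{w(T)}$ and $w(T)\neq 0$, we have $|\langle Tz,z\rangle|=w(T)\neq 0$, so the vanishing of the product forces $\langle Az,z\rangle=0$, as required.

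The main obstacle is the reduction step. The vectors $x,y$ supplied by Theorem~\ref{th-02} may a priori lie in different components $D$ and $-D$, so one cannot apply an intermediate value theorem directly to $M_{w(T)}$, which need not itself be connected. The evenness of $f$ is precisely what allows me to transport both vectors into $D$, and the hypothesis $w(T)\neq 0$ is exactly what converts a zero of the product $f$ into a zero of $\langle A\cdot,\cdot\rangle$.
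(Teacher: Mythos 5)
Your proof is correct and follows essentially the same route as the paper: both deduce necessity from Theorem~\ref{th-02} and apply the intermediate value property of the continuous product function on the connected set $D$, with sufficiency handled by taking $x=y=z$ in Theorem~\ref{th-02}. The only difference is that you explicitly justify the ``without loss of generality $x,y\in D$'' step via the evenness of $u\mapsto\langle Tu,u\rangle\langle Au,u\rangle$ and explicitly invoke $w(T)\neq 0$ at the end, both of which the paper leaves implicit.
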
 
\begin{proof}
	The sufficient part is obvious. We only prove the necessary part. Let $T\perp_wA.$ Then by Theorem \ref{th-02}, there exist $x,y\in M_{w(T)}$ such that  $\langle Tx,x\rangle \langle Ax,x\rangle \geq 0$ and $\langle Ty,y\rangle \langle Ay,y\rangle \leq 0.$ Without loss of generality, we may assume that $x,y\in D.$ Since the function $\phi:D\to\mathbb{R}$ defined by $\phi(x)=\langle Tx,x\rangle \langle Ax,x\rangle $ is continuous and $D$ is connected, $\phi(D)$ is connected. Again, $\phi(x)\geq 0$ and $\phi(y)\leq 0.$ Therefore, there exists $z\in D$ such that $\phi(z)=0,$ i.e., $\langle Tz,z\rangle \langle Az,z\rangle = 0\Rightarrow \langle Az,z\rangle = 0.$ This completes the proof.
\end{proof}

\begin{cor}
	Let $\mathbb{H}$ be a real Hilbert space and $T,A\in \mathbb{K}(\mathbb{H}),w(T)\neq 0.$ Suppose there exists $\lambda \neq 0$ such that $w(T+\lambda A)=w(T).$ Then  $T\perp_w A$ if and only if there exists $z\in M_{w(T)}$ such that $\langle Az,z\rangle = 0.$
\end{cor}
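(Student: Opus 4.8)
The sufficiency is immediate and does not even use the extra hypothesis: if $z\in M_{w(T)}$ satisfies $\langle Az,z\rangle=0$, then for every $\mu\in\mathbb{R}$ we have $|\langle (T+\mu A)z,z\rangle|=|\langle Tz,z\rangle|=w(T)$, so $w(T+\mu A)\geq w(T)$ and hence $T\perp_w A$. The real content is the necessity, and the plan is to produce the vector $z$ directly from the point $\lambda$ at which $w(T+\lambda A)=w(T)$, using convexity of the numerical-radius seminorm together with compactness, without passing through Theorem \ref{th-02} or any connectivity hypothesis.

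First I would record that $f(\mu)=w(T+\mu A)$ is convex on $\mathbb{R}$: since $w$ is a seminorm (subadditive and absolutely homogeneous) and $T+((1-t)\mu_1+t\mu_2)A=(1-t)(T+\mu_1 A)+t(T+\mu_2 A)$, convexity follows. From $T\perp_w A$ we have $f(\mu)\geq w(T)=f(0)$ for all $\mu$, while by hypothesis $f(\lambda)=w(T)$. Evaluating convexity at the midpoint gives $f(\lambda/2)\leq \tfrac12 f(0)+\tfrac12 f(\lambda)=w(T)$, and combining this with $f(\lambda/2)\geq w(T)$ yields $w(T+\tfrac{\lambda}{2}A)=w(T)$. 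Since $T+\tfrac{\lambda}{2}A$ is compact and its numerical radius equals $w(T)\neq 0$, it is attained (by the weak-compactness argument used in the proof of Theorem \ref{th-01}), so there is $z\in S_{\mathbb{H}}$ with $|\langle (T+\tfrac{\lambda}{2}A)z,z\rangle|=w(T)$.

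Now set $a=\langle Tz,z\rangle$ and $b=\langle Az,z\rangle$, both real. I would exploit three estimates for this single vector $z$: $|a|\leq w(T)$, $|a+\lambda b|=|\langle (T+\lambda A)z,z\rangle|\leq w(T+\lambda A)=w(T)$, and $|a+\tfrac{\lambda}{2}b|=w(T)$. Writing $a+\tfrac{\lambda}{2}b=\tfrac12 a+\tfrac12(a+\lambda b)$ and applying the triangle inequality forces the chain $w(T)=|a+\tfrac{\lambda}{2}b|\leq \tfrac12|a|+\tfrac12|a+\lambda b|\leq w(T)$ to be a chain of equalities; hence $|a|=|a+\lambda b|=w(T)$, and equality in the triangle inequality for the reals $\tfrac12 a$ and $\tfrac12(a+\lambda b)$ forces $a$ and $a+\lambda b$ to have the same sign. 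Having equal modulus $w(T)>0$ and the same sign, they coincide, so $\lambda b=0$ and therefore $b=\langle Az,z\rangle=0$ (using $\lambda\neq 0$). Finally $|a|=|\langle Tz,z\rangle|=w(T)$ shows $z\in M_{w(T)}$, completing the necessity.

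The convexity of $f$ and the triangle-inequality bookkeeping are routine; the single genuine ingredient is the existence of a numerical-radius-attaining vector for the compact operator $T+\tfrac{\lambda}{2}A$, which is precisely where compactness and $w(T)\neq 0$ are needed and exactly what fails for general bounded operators. I expect this to be the only step requiring care, and it is handled as in Theorem \ref{th-01}: take a maximizing sequence in $S_{\mathbb{H}}$, pass to a weakly convergent subsequence, use compactness to pass to the limit inside $\langle\,\cdot\, z,z\rangle$, and rule out a weak limit of norm strictly less than one by invoking $w(T)\neq 0$.
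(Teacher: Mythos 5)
Your proof is correct, but it takes a genuinely different route from the paper's. The paper proves necessity by invoking its Theorem \ref{th-02}: orthogonality yields $x,y\in M_{w(T)}$ with $\langle Tx,x\rangle\langle Ax,x\rangle\geq 0$ and $\langle Ty,y\rangle\langle Ay,y\rangle\leq 0$, and then (say for $\lambda>0$) the hypothesis $w(T+\lambda A)=w(T)$ rules out strict positivity at $x$, since otherwise $w(T+\lambda A)^2\geq \langle Tx,x\rangle^2+2\lambda\langle Tx,x\rangle\langle Ax,x\rangle+\lambda^2\langle Ax,x\rangle^2>w(T)^2$; as $\langle Tx,x\rangle=\pm w(T)\neq 0$, this forces $\langle Ax,x\rangle=0$ (the case $\lambda<0$ uses $y$ instead). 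You bypass the two-vector sign characterization entirely: convexity of the seminorm $f(\mu)=w(T+\mu A)$, combined with $f(\mu)\geq f(0)$ from orthogonality and $f(\lambda)=f(0)$ from the hypothesis, pins $f(\lambda/2)=w(T)$; attainment for the compact operator $T+\frac{\lambda}{2}A$ (the weak-compactness argument, where $w(T)\neq 0$ correctly rules out a weak limit of norm less than one) produces a single vector $z$; and the equality case of the triangle inequality for the reals $a=\langle Tz,z\rangle$ and $a+\lambda b$, $b=\langle Az,z\rangle$, does the rest --- the chain $w(T)=|a+\frac{\lambda}{2}b|\leq \frac{1}{2}|a|+\frac{1}{2}|a+\lambda b|\leq w(T)$ forces $|a|=|a+\lambda b|=w(T)$ with equal signs, hence $\lambda b=0$ and $z\in M_{w(T)}$. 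All steps check out. The paper's route is shorter given that Theorem \ref{th-02} is already proved; yours is self-contained modulo attainment, makes visible exactly where each hypothesis enters (compactness and $w(T)\neq 0$ only for attainment, $\lambda\neq 0$ only in the final cancellation), and yields the small bonus that $z$ can be taken to simultaneously attain the numerical radius of $T+\frac{\lambda}{2}A$.
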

\begin{proof}
	The sufficient part is obvious. We only prove the necessary part. Let $T\perp_w A.$ Then by Theorem \ref{th-02}, there exist $x,y\in M_{w(T)}$ such that  $\langle Tx,x\rangle \langle Ax,x\rangle \geq 0$ and $\langle Ty,y\rangle \langle Ay,y\rangle \leq 0.$ Without loss of generality, assume that $\lambda >0.$ If possible, suppose that   $\langle Tx,x\rangle \langle Ax,x\rangle> 0.$ Then $w(T+\lambda A)^2\geq |\langle Tx+\lambda Ax,x\rangle|^2=\langle Tx,x\rangle^2+2\lambda\langle Tx,x\rangle \langle Ax,x\rangle +\langle Ax,x \rangle ^2>\langle Tx,x\rangle^2=w(T)^2,$ contradicting the hypothesis. Thus, $\langle Tx,x\rangle \langle Ax,x\rangle = 0\Rightarrow \langle Ax,x \rangle=0.$ Similarly, if $\lambda<0,$ then $\langle Ay,y \rangle =0.$ This completes the proof.
\end{proof}

In the next corollary, we obtain a characterization of numerical radius orthogonality for a special type of rank one operators.
\begin{cor}
	Let $\mathbb{H}$ be a real Hilbert space. Then for $x,y\in S_{\mathbb{H}},$ $x\otimes x\perp_w y\otimes y$ if and only if $\langle x,y \rangle =0.$ 
\end{cor}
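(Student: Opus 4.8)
The plan is to apply Theorem \ref{th-02} directly, after first pinning down the numerical radius and the numerical-radius-attaining set of the rank one operator $T=x\otimes x$. Writing $Tz=\langle z,x\rangle x$, in the real setting one computes $\langle Tz,z\rangle=\langle z,x\rangle^2$ for every $z$. By the Cauchy-Schwarz inequality, $\langle z,x\rangle^2\le 1$ for all $z\in S_{\mathbb{H}}$, with equality precisely when $z=\pm x$. Hence $w(T)=1$ and $M_{w(T)}=\{x,-x\}$.

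Next I would evaluate the quantities appearing in Theorem \ref{th-02} for $A=y\otimes y$. For any $z\in M_{w(T)}$ we have $\langle Tz,z\rangle=1$, while $\langle Az,z\rangle=\langle z,y\rangle^2=\langle x,y\rangle^2$ (the sign of $z=\pm x$ being irrelevant, since the expression is a square). Thus on all of $M_{w(T)}$ the product $\langle Tz,z\rangle\,\langle Az,z\rangle$ equals the single nonnegative number $\langle x,y\rangle^2$.

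With these computations in hand, Theorem \ref{th-02} says that $x\otimes x\perp_w y\otimes y$ holds if and only if there exist $z_1,z_2\in M_{w(T)}$ with $\langle Tz_1,z_1\rangle\,\langle Az_1,z_1\rangle\ge 0$ and $\langle Tz_2,z_2\rangle\,\langle Az_2,z_2\rangle\le 0$. Since both products equal $\langle x,y\rangle^2$ regardless of the choice of $z_i$, the first inequality is automatic, and the second holds if and only if $\langle x,y\rangle^2\le 0$, i.e. $\langle x,y\rangle=0$. This yields the desired equivalence.

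There is no serious obstacle here; the only point requiring care is the identification $M_{w(x\otimes x)}=\{x,-x\}$, which rests on the equality case of Cauchy-Schwarz, together with the observation that passing from $x$ to $-x$ leaves all the relevant bilinear expressions unchanged because they are squares.
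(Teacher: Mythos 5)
Your proof is correct, and it takes a mildly but genuinely different route from the paper's. The paper's proof is a two-line application of Corollary \ref{cor-01}: since $M_{w(x\otimes x)}=\{\pm x\}=D\cup(-D)$ with $D=\{x\}$ connected and $w(x\otimes x)\neq 0$, orthogonality is equivalent to the existence of $z\in M_{w(T)}$ with $\langle (y\otimes y)z,z\rangle=0$, i.e.\ $\langle x,y\rangle^2=0$. You instead work directly from Theorem \ref{th-02}, computing that the product $\langle Tz,z\rangle\langle Az,z\rangle$ is identically $\langle x,y\rangle^2$ on $M_{w(T)}$, so the ``$\geq 0$ somewhere'' condition is automatic and the ``$\leq 0$ somewhere'' condition collapses to $\langle x,y\rangle=0$. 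In effect you inline the intermediate-value argument behind Corollary \ref{cor-01}, which is trivial here because the relevant function is constant on $M_{w(T)}$. What your version buys is self-containedness: you need not verify the hypotheses of the corollary (connectedness of $D$, $w(T)\neq 0$), and you supply the justification of $M_{w(x\otimes x)}=\{\pm x\}$ via the equality case of Cauchy--Schwarz, which the paper merely asserts as ``clearly.'' What the paper's version buys is brevity and a demonstration that Corollary \ref{cor-01} does real work; both arguments rest ultimately on the same Section 2 characterization.
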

\begin{proof}
	Let $x\otimes x\perp_w y\otimes y.$ Clearly, $M_{w(x\otimes x)}=\{\pm x\}.$ Thus, by Corollary \ref{cor-01}, $\langle (y\otimes y)x,x\rangle=0\Rightarrow \langle x,y\rangle ^2=0\Rightarrow \langle x, y \rangle =0.$ Conversely, let $\langle x, y \rangle =0.$ Then $ \langle (y\otimes y)x,x\rangle=0.$ Thus, by Corollary \ref{cor-01}, $x\otimes x\perp_w y\otimes y.$
\end{proof}

If $T\in \mathbb{K}(\mathbb{H}),$ where $\mathbb{H}$ is a real Hilbert space, then $w(T)=0$ does not imply that $T=0.$ In the following corollary, we characterize those $T$ for which $w(T)=0,$ in terms of numerical radius orthogonality.

\begin{cor}
	Let $\mathbb{H}$ be a real Hilbert space and $T\in \mathbb{K}(\mathbb{H}).$ Then $w(T)=0$ if and only if for every $x\in S_{\mathbb{H}},~x\otimes x\perp_wT.$
\end{cor}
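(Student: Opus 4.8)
The plan is to reduce the statement, in both directions, to the elementary fact that $w(T)=0$ is equivalent to $\langle Tx,x\rangle=0$ for every $x\in S_{\mathbb H}$, and to connect the latter with numerical radius orthogonality of the rank one operators $x\otimes x$ via Corollary \ref{cor-01}.

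First I would record the relevant data for the operator $x\otimes x$ with $x\in S_{\mathbb H}$. Since $\langle (x\otimes x)u,u\rangle=\langle u,x\rangle^2$ for all $u\in\mathbb H$, one obtains $w(x\otimes x)=1\neq 0$ and $M_{w(x\otimes x)}=\{x,-x\}$. In particular $M_{w(x\otimes x)}=D\cup(-D)$ with $D=\{x\}$ a connected subset of $S_{\mathbb H}$, so Corollary \ref{cor-01} applies, with the roles of $T$ and $A$ there played by $x\otimes x$ and $T$ respectively. It yields that $x\otimes x\perp_w T$ if and only if there exists $z\in\{x,-x\}$ with $\langle Tz,z\rangle=0$; and since $\langle T(-x),-x\rangle=\langle Tx,x\rangle$, this is the same as the single condition $\langle Tx,x\rangle=0$.

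With this equivalence in hand, both implications follow at once. For the sufficiency, if $w(T)=0$ then $\langle Tx,x\rangle=0$ for every $x\in S_{\mathbb H}$, so the displayed equivalence gives $x\otimes x\perp_w T$ for every such $x$. For the necessity, if $x\otimes x\perp_w T$ holds for every $x\in S_{\mathbb H}$, then the same equivalence forces $\langle Tx,x\rangle=0$ for every $x\in S_{\mathbb H}$, whence $w(T)=\sup_{\|x\|=1}|\langle Tx,x\rangle|=0$.

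The argument is essentially immediate once Corollary \ref{cor-01} is invoked, so I do not expect a serious obstacle. The only points requiring a little care are the identification $M_{w(x\otimes x)}=\{x,-x\}$ together with the observation that a singleton $D$ is connected (which is what licenses the application of Corollary \ref{cor-01}), and the evenness of the quadratic form $z\mapsto\langle Tz,z\rangle$, which is what collapses the choice $z\in\{x,-x\}$ to the single condition $\langle Tx,x\rangle=0$.
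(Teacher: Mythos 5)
Your proof is correct and follows essentially the same route as the paper's, which likewise deduces the corollary from Corollary \ref{cor-01} together with the fact that $M_{w(x\otimes x)}=\{\pm x\}$. You have simply made explicit the details the paper leaves implicit (applicability of Corollary \ref{cor-01} with $D=\{x\}$, $w(x\otimes x)=1\neq 0$, and the evenness of $z\mapsto\langle Tz,z\rangle$), all of which check out.
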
 
\begin{proof}
	The proof follows easily from Corollary \ref{cor-01} and the fact that $M_{w(x\otimes x)}=\{\pm x\},$ for each $x\in S_{\mathbb{H}}.$
\end{proof}

\section{Application:Lower bounds of the numerical radius for operators}
In this section, we apply the results obtained in Section 2 to find some lower bounds of numerical radius for operators. In \cite[Th. 3.7]{HKS}, the authors obtained a lower bound of numerical radius for $2\times 2$ operator matrix using certain pinching inequalities \cite[Page 107]{Bh}. Here, we give an alternative proof of \cite[Th. 3.7]{HKS} without using such inequality. We also obtain a lower bound of numerical radius for $n\times n$ operator matrix. To do so, we need the following lemma.

\begin{lemma}\label{lem-2x2}
	Suppose $A,B,C,D\in \mathbb{B}(\mathbb{H}),$  where $\mathbb{H}$ is a complex Hilbert space. Then 
	\[
	w\Big(\begin{bmatrix}
	A&B\\
	C&D
	\end{bmatrix}\Big)=
	w\Big(\begin{bmatrix}
	A&iB\\
	-iC&D
	\end{bmatrix}\Big)
	\]
\end{lemma}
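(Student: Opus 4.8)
The plan is to exploit the unitary invariance of the numerical radius. Recall that for any unitary $U$ on a Hilbert space and any $T\in\mathbb{B}(\mathbb{H})$ one has $w(U^*TU)=w(T)$: for $x\in S_{\mathbb{H}}$ we have $\langle U^*TUx,x\rangle=\langle T(Ux),Ux\rangle$, and since $U$ maps $S_{\mathbb{H}}$ bijectively onto itself, taking the supremum of the modulus over the unit sphere gives the same value on both sides. Hence it suffices to exhibit a single unitary $U$ on $\mathbb{H}\oplus\mathbb{H}$ that conjugates the first operator matrix into the second.

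The natural candidate is the diagonal unitary
\[
U=\begin{bmatrix} I & 0 \\ 0 & iI \end{bmatrix},
\]
whose adjoint is $U^*=\begin{bmatrix} I & 0 \\ 0 & -iI\end{bmatrix}$ and which plainly satisfies $U^*U=UU^*=I$. I would then carry out the block product
\[
U^*\begin{bmatrix} A & B \\ C & D\end{bmatrix}U
=\begin{bmatrix} I & 0 \\ 0 & -iI\end{bmatrix}\begin{bmatrix} A & iB \\ C & iD\end{bmatrix}
=\begin{bmatrix} A & iB \\ -iC & D\end{bmatrix},
\]
where the lower-right entry returns to $D$ because $-i\cdot i=1$. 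Since the result is exactly the second matrix, combining this computation with the unitary invariance recalled above immediately yields the asserted equality.

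There is no genuine obstacle here. Once the correct diagonal unitary is identified, the verification is a routine $2\times2$ block multiplication, and the only conceptual ingredient is the unitary invariance of $w(\cdot)$, which is elementary. The one point requiring care is the placement of the factors $i$ and $-i$: choosing $\mathrm{diag}(I,iI)$ fixes the signs of the off-diagonal blocks, and one should check that the diagonal blocks $A$ and $D$ are left unchanged, which is precisely what the cancellation $|\,i\,|^2=1$ guarantees.
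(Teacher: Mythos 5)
Your proof is correct and follows essentially the same route as the paper: both arguments invoke the unitary invariance $w(U^*TU)=w(T)$ and conjugate by a diagonal block unitary. Your choice $U=\mathrm{diag}(I,iI)$ differs from the paper's $U=\mathrm{diag}(-iI,I)$ only by the unimodular scalar $-i$, so the two conjugations coincide.
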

\begin{proof}
The proof easily follows from the fact that $w(U^*TU)=w(T)$ for any unitary operator $U$ and considering $U=\begin{bmatrix} -iI&O\\O&I\end{bmatrix},$ $T=\begin{bmatrix}
	A&B\\
	C&D
	\end{bmatrix}.$ 

\end{proof}

\begin{theorem}\label{th-2x2}
Let $\mathbb{H}$ be a complex Hilbert space and let $A,B,C,D\in \mathbb{B}(\mathbb{H}).$ Let
\[
T=\begin{bmatrix}
A&B\\
C&D
\end{bmatrix}.
\]
Then $w(T)\geq\max\{w(A),w(D),\frac{1}{2}w(B+C),\frac{1}{2}w(B-C)\}.$ 	
\end{theorem}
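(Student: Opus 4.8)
The plan is to establish each of the four lower bounds by exhibiting, for each target quantity, a unit vector in $\mathbb{H}\oplus\mathbb{H}$ on which the quadratic form $\langle T\,\cdot,\,\cdot\rangle$ reproduces the corresponding numerical value. The bounds $w(T)\geq w(A)$ and $w(T)\geq w(D)$ are the easy ones: for any $x\in S_{\mathbb{H}}$, the vector $(x,0)\in S_{\mathbb{H}\oplus\mathbb{H}}$ satisfies $\langle T(x,0),(x,0)\rangle=\langle Ax,x\rangle$, so taking supremum over $x$ gives $w(T)\geq w(A)$, and symmetrically $(0,x)$ yields $w(T)\geq w(D)$. I would dispose of these two first since they require no orthogonality machinery at all.

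The substance lies in the bounds $w(T)\geq\frac12 w(B\pm C)$, and here I expect to use the results of Section~2 together with Lemma~\ref{lem-2x2}. For vectors of the mixed form $\bigl(\tfrac{1}{\sqrt2}x,\tfrac{1}{\sqrt2}y\bigr)$ one computes
\[
\Big\langle T\Big(\tfrac{x}{\sqrt2},\tfrac{y}{\sqrt2}\Big),\Big(\tfrac{x}{\sqrt2},\tfrac{y}{\sqrt2}\Big)\Big\rangle
=\tfrac12\big(\langle Ax,x\rangle+\langle By,x\rangle+\langle Cx,y\rangle+\langle Dy,y\rangle\big),
\]
so the off-diagonal contribution is $\tfrac12(\langle By,x\rangle+\langle Cx,y\rangle)$. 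The natural move is to take $y=x$, giving the off-diagonal term $\tfrac12\langle(B+C)x,x\rangle$, which points toward $\tfrac12 w(B+C)$; but the diagonal terms $\tfrac12\langle(A+D)x,x\rangle$ do not vanish and contaminate the estimate. To isolate the off-diagonal part I would instead average over a sign or phase: considering the two vectors $\bigl(\tfrac{x}{\sqrt2},\tfrac{x}{\sqrt2}\bigr)$ and $\bigl(\tfrac{x}{\sqrt2},-\tfrac{x}{\sqrt2}\bigr)$, the diagonal contributions $\tfrac12\langle(A+D)x,x\rangle$ agree while the off-diagonal contributions are $\pm\tfrac12\langle(B+C)x,x\rangle$; since $w(T)$ bounds the modulus of both, a triangle-inequality argument on $w(T)\geq\max$ of the two moduli yields $w(T)\geq\tfrac12|\langle(B+C)x,x\rangle|$, and supremizing over $x$ gives $w(T)\geq\tfrac12 w(B+C)$.

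To obtain the companion bound $w(T)\geq\tfrac12 w(B-C)$ I would invoke Lemma~\ref{lem-2x2}, which asserts
\[
w\Big(\begin{bmatrix}A&B\\C&D\end{bmatrix}\Big)=w\Big(\begin{bmatrix}A&iB\\-iC&D\end{bmatrix}\Big).
\]
Applying the already-established $\tfrac12 w(B'+C')$ bound to the unitarily equivalent matrix on the right, with $B'=iB$ and $C'=-iC$, gives $w(T)\geq\tfrac12 w(iB-iC)=\tfrac12 w(B-C)$, since numerical radius is homogeneous under scalar multiplication by unimodular constants. Taking the maximum of the four displayed lower bounds completes the argument. The main obstacle I anticipate is cleanly separating the diagonal and off-diagonal quadratic-form contributions; the sign-averaging device over the pair $(x,\pm x)$ is the crux, and one must verify that the real-versus-complex subtleties of the numerical radius (as flagged in the introduction) do not interfere—here I would rely on the complex setting so that $|\langle(B+C)x,x\rangle|$ is genuinely captured after absorbing the common diagonal term.
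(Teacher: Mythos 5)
Your proof is correct and follows essentially the same route as the paper: the vectors $(x,0)$, $(0,x)$ for the diagonal bounds, the sign-averaged pair $\tfrac{1}{\sqrt2}(x,\pm x)$ to isolate the off-diagonal term $\tfrac12\langle (B+C)x,x\rangle$, and Lemma~\ref{lem-2x2} with $B\mapsto iB$, $C\mapsto -iC$ for the $\tfrac12 w(B-C)$ bound. The only (harmless) cosmetic differences are that you extract $w(T)\geq\tfrac12|\langle(B+C)x,x\rangle|$ via the triangle inequality $\max\{|a+b|,|a-b|\}\geq|b|$ and then take a supremum over $x$, whereas the paper runs the same sign-choice trick on a maximizing sequence and drops a nonnegative cross term in the squared expansion.
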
	
\begin{proof}
	Let $\{X_n\}$ be a sequence in $S_{\mathbb{H}}$ such that $|\langle AX_n,X_n\rangle|\to w(A).$ Then $w(T)\geq|\langle T(X_n,O),(X_n,O)\rangle|=|\langle(AX_n,CX_n),(X_n,O)\rangle| = |\langle AX_n,X_n\rangle|.$ Thus, $w(T)\geq\lim_{n\to\infty}|\langle AX_n,X_n\rangle|\Rightarrow w(T)\geq w(A).$ Similarly, $w(T)\geq w(D).$ Now, we show that $w(T)\geq \frac{1}{2}w(B+C).$ Consider 
	\[
	R=\begin{bmatrix}
	A&O\\
	O&D
	\end{bmatrix},
	S=\begin{bmatrix}
	O&B\\
	C&O
	\end{bmatrix}.
	\]	
 Let $\{X_n\}$ be a sequence in $S_{\mathbb{H}}$ such that $|\langle \frac{B+C}{2}X_n,X_n\rangle|\to w(\frac{B+C}{2}),$ $Z_{1n}=\frac{1}{\sqrt{2}}(X_n,X_n)$ and $Z_{2n}=\frac{1}{\sqrt{2}}(-X_n,X_n).$ Then
 \begin{eqnarray*}
 \langle SZ_{1n},Z_{1n}\rangle &=& \frac{1}{2}\Big[\langle BX_n,X_n\rangle+\langle CX_n,X_n\rangle\Big]\\
 &=&\langle \frac{B+C}{2}X_n,X_n\rangle.
 \end{eqnarray*}
Similarly, $\langle SZ_{2n},Z_{2n}\rangle=-\langle \frac{B+C}{2}X_n,X_n\rangle.$ Thus, 
$$\lim_{n\to \infty}|\langle SZ_{1n},Z_{1n}\rangle|=\lim_{n\to\infty}|\langle SZ_{2n},Z_{2n}\rangle|=\lim_{n\to\infty}|\langle \frac{B+C}{2}X_n,X_n\rangle|=w(\frac{B+C}{2}).$$ 
Clearly, $\langle RZ_{1n},Z_{1n}\rangle=\langle RZ_{2n},Z_{2n}\rangle=\frac{1}{2}[\langle AX_n,X_n\rangle+\langle DX_n,X_n\rangle].$ Since $\langle SZ_{1n},Z_{1n}\rangle\\
=-\langle SZ_{2n},Z_{2n}\rangle$ and $\langle RZ_{1n},Z_{1n}\rangle=\langle RZ_{2n},Z_{2n}\rangle,$ so without loss of generality, we may assume that for all $n\in \mathbb{N},$ either
 $$Re\{\langle RZ_{1n},Z_{1n}\rangle \overline{\langle SZ_{1n},Z_{1n}\rangle}\}\geq 0 ~\text{or}~ Re\{\langle RZ_{2n},Z_{2n}\rangle \overline{\langle SZ_{2n},Z_{2n}\rangle}\}\geq 0.$$ Without loss of generality, let $Re\{\langle RZ_{1n},Z_{1n}\rangle \overline{\langle SZ_{1n},Z_{1n}\rangle}\}\geq 0.$ Then
\begin{eqnarray*}
w(T)^2=w(R+S)^2&\geq&  |\langle RZ_{1n}+SZ_{1n},Z_{1n}\rangle|^2\\
&=& |\langle RZ_{1n},Z_{1n}\rangle|^2+2Re\{\langle RZ_{1n},Z_{1n}\rangle \overline{\langle SZ_{1n},Z_{1n}\rangle}\}\\
&&+|\langle SZ_{1n},Z_{1n}\rangle|^2\\
&\geq&|\langle SZ_{1n},Z_{1n}\rangle|^2\\
\Rightarrow w(T)^2&\geq&\lim_{n\to\infty}|\langle SZ_{1n},Z_{1n}\rangle|^2=w(\frac{B+C}{2})^2.
\end{eqnarray*}
Thus,  
\begin{eqnarray}
w(T)\geq w(\frac{B+C}{2}).
\end{eqnarray}
Now, considering
\[
T_1=\begin{bmatrix}
A&iB\\
-iC&D
\end{bmatrix}
\]
and replacing $B,C$ by $iB,-iC$ respectively in inequality $(2),$ we get, $w(T_1)\geq w(i\frac{B-C}{2})=w(\frac{B-C}{2}).$ Now, using Lemma \ref{lem-2x2}, we get $w(T)=w(T_1).$ Thus, $w(T)\geq\max\{w(A),w(D),\frac{1}{2}w(B+C),\frac{1}{2}w(B-C)\}.$ This completes the proof of the theorem.	
\end{proof}

 In the following theorem, we obtain a lower bound of the numerical radius for an $n\times n$ operator matrix. 
 \begin{theorem}\label{th-nXn}
 	Let $H_1,H_2,\ldots,H_n$ be complex Hilbert spaces and $\mathbb{H}=\oplus_{i=1}^n H_i.$ Let $A=(A_{ij}),$ where $A_{ij}\in \mathbb{B}(H_j,H_i)$ and  $\mathbb{B}(H_j,H_i)$ denote the space of all bounded linear operators from $H_j$ to $H_i.$ Then 
 	\[w(A)\geq \max\Big\{w(A_{kk}),w(T_i):1\leq k\leq n,1\leq i\leq n
 	\Big\},\] where $T_i=(t^i_{jk})_{n\times n}$ for each $i=1,2,\ldots,n$ and 
 	\[	t^i_{jk} =
 	\left\{
 	\begin{array}{ll}
 	O  & \mbox{if } j=i ~\mbox{or} ~k=i \\
 	A_{jk} & \mbox{otherwise, } 
 	\end{array}
 	\right.
 	\]
 	i.e., 
 	\[
 	T_i=\begin{bmatrix}
 	A_{11} &\ldots&A_{1(i-1)}&O  &A_{1(i+1)}&\ldots&A_{1n}\\
 	\vdots &\vdots &\ldots&\vdots &\vdots&  \vdots   & \ldots \\
 	A_{(i-1)1} &\ldots&A_{(i-1)(i-1)}&O  &A_{(i-1)(i+1)}&\ldots&A_{(i-1)n}\\
 	O &\ldots&O&O&O&\ldots&O \\
 	A_{(i+1)1} &\ldots&A_{(i+1)(i-1)}&O  &A_{(i+1)(i+1)}&\ldots&A_{(i+1)n}\\
 	\vdots &\ldots&\vdots &\vdots&  \vdots   & \ldots &\vdots\\
 	A_{n1} &\ldots&A_{n(i-1)}&O  &A_{n(i+1)}&\ldots&A_{nn}	
 	\end{bmatrix}
 	\]
 \end{theorem}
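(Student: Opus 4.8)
The plan is to establish the two families of lower bounds, $w(A)\ge w(A_{kk})$ and $w(A)\ge w(T_i)$, separately and then take the maximum over $k$ and $i$. Each bound is an instance of the principle that the numerical radius does not increase under compression to a subspace, realised through suitable test vectors exactly as in the first part of the proof of Theorem \ref{th-2x2} (where $w(T)\ge w(A)$ was obtained from vectors of the form $(X_n,O)$).

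For the diagonal blocks, I would fix $k$ and choose a sequence $\{X_n\}$ in $S_{H_k}$ with $|\langle A_{kk}X_n,X_n\rangle|\to w(A_{kk})$. Embedding $X_n$ into $\mathbb{H}$ as the vector $\widetilde{X}_n$ having $X_n$ in the $k$-th coordinate and $O$ elsewhere gives $\widetilde{X}_n\in S_{\mathbb{H}}$, and the block structure yields the identity $\langle A\widetilde{X}_n,\widetilde{X}_n\rangle=\langle A_{kk}X_n,X_n\rangle$. Hence $w(A)\ge|\langle A\widetilde{X}_n,\widetilde{X}_n\rangle|\to w(A_{kk})$, so $w(A)\ge w(A_{kk})$ for every $k$. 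For the bounds involving $T_i$, the key observation is that $T_i=Q_iAQ_i$, where $Q_i$ is the orthogonal projection of $\mathbb{H}$ onto the subspace $\mathbb{H}_i:=\oplus_{j\ne i}H_j$ of vectors whose $i$-th coordinate vanishes. For any $Y\in S_{\mathbb{H}_i}$ we have $Q_iY=Y$, so $\langle T_iY,Y\rangle=\langle AQ_iY,Q_iY\rangle=\langle AY,Y\rangle$ and therefore $w(A)\ge|\langle AY,Y\rangle|=|\langle T_iY,Y\rangle|$. Choosing a sequence $\{Y_n\}$ in $S_{\mathbb{H}_i}$ with $|\langle T_iY_n,Y_n\rangle|\to w(T_i)$ then gives $w(A)\ge w(T_i)$, and taking the maximum over all $k$ and $i$ completes the argument.

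The one point requiring care, and the place I expect the main (mild) obstacle, is justifying that the supremum defining $w(T_i)$ is genuinely captured by unit vectors lying in $\mathbb{H}_i$, i.e. that restricting to $\mathbb{H}_i$ in the last step loses nothing. This holds because $T_i=Q_iAQ_i$ forces $\langle T_ix,x\rangle=\langle AQ_ix,Q_ix\rangle$ to depend on $x$ only through $Q_ix$; applying the scaling estimate $|\langle Ay,y\rangle|\le\|y\|^2w(A)$ with $y=Q_ix$ (where $\|Q_ix\|\le 1$) shows directly that $w(T_i)\le w(A)$ and that the supremum may be realised by a sequence in $S_{\mathbb{H}_i}$. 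Alternatively, grouping the $i$-th coordinate against the remaining ones recasts $A$ as a $2\times 2$ operator matrix whose diagonal blocks are $A_{ii}$ and a copy of $T_i$ (the latter being an orthogonal direct sum of the off-$i$ compression with the zero operator, so having the same numerical radius), whence both bounds also follow from Theorem \ref{th-2x2}. Apart from this observation, the remaining computations are the routine block-matrix identities already employed in the proof of Theorem \ref{th-2x2}.
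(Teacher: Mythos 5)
Your proof is correct, but it takes a genuinely different route from the paper's. The diagonal-block bound $w(A)\geq w(A_{kk})$ is handled identically in both arguments, so the comparison concerns $w(A)\geq w(T_i)$. The paper stays inside its orthogonality framework: writing $S_i=A-T_i$, it takes a maximizing sequence $\{X_m\}$ for $w(T_i)$, deletes the $i$-th coordinate and rescales by a factor $\alpha>1$ to get $Z_m\in S_{\mathbb{H}}$ with $(Z_m)_i=0$ still attaining $w(T_i)$ in the limit, observes that $\langle S_iZ_m,Z_m\rangle=0$, invokes Theorem \ref{th-001} to conclude $T_i\perp_w S_i$, and deduces $w(A)=w(T_i+S_i)\geq w(T_i)$. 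You instead prove a direct compression inequality: since $T_i=Q_iAQ_i$ for the orthogonal projection $Q_i$ onto $\oplus_{j\neq i}H_j$, one has $|\langle T_ix,x\rangle|=|\langle AQ_ix,Q_ix\rangle|\leq \|Q_ix\|^2\,w(A)\leq w(A)$ for every $x\in S_{\mathbb{H}}$, so $w(T_i)\leq w(A)$ with no orthogonality input at all. You were right to flag the restriction to unit vectors of $\oplus_{j\neq i}H_j$ as the delicate point --- your first-paragraph choice of a maximizing sequence $\{Y_n\}$ in $S_{\mathbb{H}_i}$ would by itself be circular --- but your scaling estimate closes that gap cleanly, and it is in substance the same rescaling trick the paper applies to produce $Z_m$. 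As for what each approach buys: yours is shorter, more elementary, and isolates the general fact $w(QAQ)\leq w(A)$ for any orthogonal projection $Q$; the paper's argument is heavier but establishes the formally stronger relation $T_i\perp_w S_i$, i.e.\ $w(T_i+\lambda S_i)\geq w(T_i)$ for \emph{all} $\lambda\in\mathbb{C}$ rather than just $\lambda=1$, and serves the paper's stated purpose of exhibiting Theorem \ref{th-001} in action. Your alternative closing remark, grouping the $i$-th coordinate against the rest and using Theorem \ref{th-2x2} together with $w(T_i)=w(R_i)$ (the off-$i$ compression padded by a zero block), is also sound and is essentially the observation the paper itself uses to pass from Theorem \ref{th-nXn} to Theorem \ref{th-nXn2}.
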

 \begin{proof}
 	Let $1\leq i,k\leq n.$ Clearly, $w(A)\geq w(A_{kk}),$ since if  $\{X_{kn}\}\subset S_{H_k}$ such that $ \lim_{n\to\infty}|\langle A_{kk}X_{kn},X_{kn}\rangle|= w(A_{kk}),$ then for $Z_n=(0,0,\ldots,X_{kn},0,\ldots,0)\in S_{\mathbb{H}},$ $\lim_{n\to\infty}|\langle AZ_n,Z_n\rangle|=\lim_{n\to\infty}|\langle A_{kk}X_{kn},X_{kn}\rangle|=w(A_{kk}).$ Therefore, $w(A)\geq w(A_{kk}).$ We only show that $w(A)\geq w(T_i),$ where $T_i$ is defined as in the statement of the theorem. Suppose $S_i=A-T_i.$ We first show that $T_i\perp_w S_i.$ Let $X=(X_1,X_2,\ldots,X_n)\in S_{\mathbb{H}}.$ Then
 	\begin{eqnarray*}
 		|\langle T_iX,X\rangle|&=& |\sum_{j=1,j\neq i}^{n} \sum_{k=1,k\neq i}^{n}\langle A_{jk}X_k,X_j\rangle|.
 	\end{eqnarray*}
 	Now, suppose $\{X_m\}\subset S_{\mathbb{H}}$ be such that $\lim_{m\to\infty} |\langle T_i X_m,X_m\rangle|={w(T_i)}.$ We claim that there exists a sequence $\{Z_m\}$ in $ S_{\mathbb{H}}$ such that $w(T_i)=\lim_{m\to\infty}|\langle T_iZ_m,Z_m\rangle|$ and $(Z_m)_i=0$ for each $m\in \mathbb{N}.$ Suppose that $\sum_{k=1,k\neq i}^n\|(X_m)_k\|^2<1.$ Let $\alpha=\frac{1}{\sqrt{\sum_{k=1,k\neq i}^n\|(X_m)_k\|^2}}$ and $Z_m=\alpha(X_{m1},\ldots,X_{m(i-1)},0,X_{m(i+1)},\ldots,X_{mn}).$ Then $Z_m\\
	\in S_{\mathbb{H}}$ and $\alpha >1.$ Clearly, $|\langle T_iZ_m,Z_m\rangle|=\alpha^2|\sum_{j=1,j\neq i}^{n} \sum_{k=1,k\neq i}^{n}\langle A_{jk}X_{mk},X_{mj}\rangle|>|\langle T_iX_m,X_m\rangle|.$ Thus, $$w(T_i)\geq\lim_{m\to\infty}|\langle T_iZ_m,Z_m\rangle|\geq \lim_{m\to\infty}|\langle T_iX_m,X_m\rangle|=w(T_i).$$  This proves our claim. Now,
 	\begin{eqnarray*}
 		\langle S_iZ_m,Z_m \rangle&=&\langle (0,\ldots,0,\underset{i^{th}~\text{position}}{\underline{\sum_{j=1,j\neq i}^nA_{ij}X_{mj}}},0,\ldots,0),\\
 		&& (X_{m1},\ldots,X_{m(i-1)},0, X_{m(i+1)},\ldots,X_{mn})\rangle\\
 		&=&0.
 	\end{eqnarray*}
 	Thus, for each $\theta \in [0,2\pi),~Re\{e^{-i\theta}\langle T_iZ_m,Z_m \rangle \overline{\langle S_iZ_m,Z_m \rangle}\}=0.$ Hence, by Theorem \ref{th-001}, $T_i\perp_wS_i.$ Therefore, $w(A)=w(T_i+S_i)\geq w(T_i).$ This completes the proof of the theorem.
 \end{proof}
 
 Now applying Theorem \ref{th-nXn}, we obtain another lower bound of numerical radius for $n\times n$ operator matrix. 
\begin{theorem}\label{th-nXn2}
Let $H_1,H_2,\ldots,H_n$ be complex Hilbert spaces and $\mathbb{H}=\oplus_{i=1}^n H_i.$ Let $A=(A_{ij}),$ where $A_{ij}\in \mathbb{B}(H_j,H_i).$ Then 
\[w(A)\geq \max\Big\{w(A_{kk}),
w\Big(\begin{bmatrix}
A_{ii}&A_{ij}\\
A_{ji}&A_{jj}
\end{bmatrix}
\Big):\text{$1\leq k\leq n,1\leq i<j\leq n$}
\Big\}.\]
\end{theorem}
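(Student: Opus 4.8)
The plan is to peel off the indices one at a time, reducing the $n\times n$ matrix to the $2\times 2$ corner block indexed by a fixed pair $i<j$, and to invoke Theorem \ref{th-nXn} at each step. The bound $w(A)\ge w(A_{kk})$ is already contained in Theorem \ref{th-nXn}, so the only new content is the inequality $w(A)\ge w\big(\big[\begin{smallmatrix}A_{ii}&A_{ij}\\ A_{ji}&A_{jj}\end{smallmatrix}\big]\big)$ for each fixed pair $i<j$, and this is what I would establish.

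Fix $i<j$ and enumerate the remaining indices as $\{1,\dots,n\}\setminus\{i,j\}=\{\ell_1,\dots,\ell_{n-2}\}$. I would build a chain of operator matrices $A=B^{(0)},B^{(1)},\dots,B^{(n-2)}=B$, where $B^{(m)}$ is obtained from $B^{(m-1)}$ by replacing its $\ell_m$-th row and $\ell_m$-th column by zero blocks. Since Theorem \ref{th-nXn} applies to an arbitrary operator matrix, applying it to $B^{(m-1)}$ with deleted index $\ell_m$ gives exactly $w(B^{(m-1)})\ge w(B^{(m)})$, because the operator ``$T_{\ell_m}$'' of that theorem formed from $B^{(m-1)}$ is precisely $B^{(m)}$. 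Chaining these inequalities yields $w(A)\ge w(B)$, where $B$ is the matrix all of whose entries vanish except the four corner blocks $B_{ii}=A_{ii},\,B_{ij}=A_{ij},\,B_{ji}=A_{ji},\,B_{jj}=A_{jj}$.

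It then remains to identify $w(B)$ with the numerical radius of the genuine $2\times 2$ block $M=\big[\begin{smallmatrix}A_{ii}&A_{ij}\\ A_{ji}&A_{jj}\end{smallmatrix}\big]$ acting on $H_i\oplus H_j$. For $X=(X_1,\dots,X_n)\in S_{\mathbb{H}}$, the quadratic form $\langle BX,X\rangle$ depends only on the two components $X_i,X_j$ and equals $\langle M(X_i,X_j),(X_i,X_j)\rangle$. Taking $X$ supported only on the $i$-th and $j$-th coordinates, so that $\|(X_i,X_j)\|=1$, shows $w(B)\ge w(M)$, which is all that is needed; combining this with $w(A)\ge w(B)$ gives $w(A)\ge w(M)$.

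The computations here are all routine; the one point requiring care is the bookkeeping for the iterated application of Theorem \ref{th-nXn} — namely, verifying that after zeroing out the rows and columns indexed by $\ell_1,\dots,\ell_{m-1}$, the operator ``$T_{\ell_m}$'' produced by Theorem \ref{th-nXn} from $B^{(m-1)}$ coincides with the next matrix $B^{(m)}$ in the chain, so that the theorem genuinely applies at every stage.
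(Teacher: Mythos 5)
Your proposal is correct and follows essentially the same route as the paper: both iterate Theorem \ref{th-nXn} to kill every index outside the fixed pair $i<j$, the only difference being that the paper compresses to an $(n-1)\times(n-1)$ matrix at each step via the identification $w(T_i)=w(R_i)$, while you keep the matrices at full size and perform the (equivalent) identification $w(B)\ge w\big(\begin{bmatrix} A_{ii}&A_{ij}\\ A_{ji}&A_{jj}\end{bmatrix}\big)$ once at the end. Your version even makes explicit the bookkeeping step ($T_{\ell_m}$ formed from $B^{(m-1)}$ equals $B^{(m)}$) that the paper leaves as an observation, so nothing is missing.
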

\begin{proof}
 Observe that $w(T_i)=w(R_i),$ where $T_i$ is defined as in Theorem \ref{th-nXn} and 
 	\[
 R_i=\begin{bmatrix}
 A_{11} &\ldots&A_{1(i-1)} &A_{1(i+1)}&\ldots&A_{1n}\\
 \vdots &\ldots &\vdots &\vdots&  \ldots   & \vdots \\
 A_{(i-1)1} &\ldots&A_{(i-1)(i-1)} &A_{(i-1)(i+1)}&\ldots&A_{(i-1)n}\\
 A_{(i+1)1} &\ldots&A_{(i+1)(i-1)}  &A_{(i+1)(i+1)}&\ldots&A_{(i+1)n}\\
 \vdots &\ldots&\vdots &  \vdots   & \ldots &\vdots\\
 A_{n1} &\ldots&A_{n(i-1)} &A_{n(i+1)}&\ldots&A_{nn}	
 \end{bmatrix}.
 \]
 Applying Theorem \ref{th-nXn}, repeatedly on $R_i$ for each $1\leq i\leq n$ we get the required inequality.
\end{proof}

In Theorem \ref{th-nXn2}, if we assume that $H_1=H_2=\ldots=H_n,$ then we get the following corollary.
\begin{cor}\label{cor-nxn}
	Let $\mathbb{H}$ be a complex Hilbert space and  $A=(A_{ij}),$ where $A_{ij}\in \mathbb{B}(\mathbb{H}).$   Then
	\[w(A)\geq \max \Big\{w(A_{kk}),\frac{1}{2}w(A_{ij}+A_{ji}),\frac{1}{2}w(A_{ij}-A_{ji}):1\leq i,j,k\leq n\Big\}.\] 
\end{cor}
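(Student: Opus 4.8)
The plan is to deduce this corollary by simply chaining together the two lower bounds already established, namely Theorem \ref{th-nXn2} and Theorem \ref{th-2x2}. Under the standing assumption $H_1 = H_2 = \cdots = H_n = \mathbb{H}$, every off-diagonal block $A_{ij}$ now lives in $\mathbb{B}(\mathbb{H})$, so the combinations $A_{ij} + A_{ji}$ and $A_{ij} - A_{ji}$ are legitimate operators on $\mathbb{H}$ with well-defined numerical radii. This is the only structural feature that the special case buys us over the general setting.

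First I would invoke Theorem \ref{th-nXn2}, which already furnishes
\[
w(A) \geq \max\Big\{w(A_{kk}),\, w\Big(\begin{bmatrix} A_{ii} & A_{ij} \\ A_{ji} & A_{jj} \end{bmatrix}\Big) : 1 \leq k \leq n,\ 1 \leq i < j \leq n\Big\}.
\]
Then, to each of the $2 \times 2$ operator matrices appearing here I would apply Theorem \ref{th-2x2}, with the block assignments $A \mapsto A_{ii}$, $B \mapsto A_{ij}$, $C \mapsto A_{ji}$, $D \mapsto A_{jj}$, obtaining
\[
w\Big(\begin{bmatrix} A_{ii} & A_{ij} \\ A_{ji} & A_{jj} \end{bmatrix}\Big) \geq \max\Big\{w(A_{ii}),\, w(A_{jj}),\, \tfrac{1}{2}w(A_{ij}+A_{ji}),\, \tfrac{1}{2}w(A_{ij}-A_{ji})\Big\}.
\]
Substituting this back and taking the overall maximum yields exactly the asserted bound, but with the off-diagonal terms ranging over the restricted index set $1 \leq i < j \leq n$.

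The only remaining point — the step that asks for a moment's care rather than genuine difficulty — is to widen the constraint $1 \leq i < j \leq n$ to the symmetric range $1 \leq i, j \leq n$ written in the statement. For $i = j$ the two candidate quantities degenerate to $\tfrac{1}{2}w(2A_{ii}) = w(A_{ii})$ and $\tfrac{1}{2}w(0) = 0$, both already dominated by the diagonal terms $w(A_{kk})$; and for $i > j$ the invariances $w(A_{ij}+A_{ji}) = w(A_{ji}+A_{ij})$ and $w(A_{ij}-A_{ji}) = w(A_{ji}-A_{ij})$ (the latter using $w(-X) = w(X)$) show these terms merely duplicate those already counted for $i < j$. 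Hence enlarging the index set does not change the maximum, and the corollary follows at once.
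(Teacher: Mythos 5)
Your proposal is correct and follows essentially the same route as the paper: both deduce the bound by chaining Theorem \ref{th-nXn2} with Theorem \ref{th-2x2} applied to the $2\times 2$ principal submatrices. Your explicit handling of the $i=j$ and $i>j$ cases merely spells out what the paper compresses into its ``without loss of generality, $i<j$'' remark.
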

\begin{proof}
	We only have to show that for $1\leq i,j\leq n,$
	\[w(A)\geq \max \Big\{\frac{1}{2}w(A_{ij}+A_{ji}),\frac{1}{2}w(A_{ij}-A_{ji})\Big\}.\] Without loss of generality, let us assume that $i<j.$ Then from Theorem \ref{th-nXn2} and Theorem \ref{th-2x2}, we have 
	\[
	w(A)\geq
	w\Big(\begin{bmatrix}
	A_{ii}&A_{ij}\\
	A_{ji}&A_{jj}
	\end{bmatrix}
	\Big)\geq \max \Big\{\frac{1}{2}w(A_{ij}+A_{ji}),\frac{1}{2}w(A_{ij}-A_{ji})\Big\}.
	\]
	This completes the proof of the corollary.
\end{proof}

In the next theorem, we obtain a lower bound of numerical radius for upper triangular $n\times n$ operator matrix.

\begin{theorem}\label{th-operatormatrix}
	Let $H_1,H_2,\ldots,H_n$ be complex Hilbert spaces and $\mathbb{H}=\oplus_{i=1}^n H_i.$ 
	Let 
	\[
	A = \begin{bmatrix} 
	A_{11} & A_{12} &A_{13} &\ldots&A_{1n}  \\
	O & A_{22}   & A_{23} &\ldots  &A_{2n} \\
	\vdots  &\ddots&\ddots& \ddots&\vdots\\
	\vdots  & &\ddots& \ddots&A_{(n-1)n}\\
	O&\ldots&\ldots&O&A_{nn}
	\end{bmatrix},
	\]
	i.e., $A=(A_{ij})_{n\times n}\in \mathbb{B}(\mathbb{H}),$ where for $1\leq i,j\leq n,~A_{ij}\in \mathbb{B}(H_j,H_i)$ and $A_{ij}=O$ if $i>j.$ Then $w(A)\geq \max\{w(A_{kk}),\frac{\|A_{ij}\|}{2}:i,j,k\in\{1,2,\ldots,n \},i<j\}.$ 
\end{theorem}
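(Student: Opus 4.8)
The plan is to reduce everything to a single $2\times 2$ upper triangular block and then exploit the freedom to rotate the phase of the off-diagonal entry. The diagonal part of the estimate is already free: since $A=(A_{kk})$ appears on the diagonal, Theorem \ref{th-nXn2} immediately gives $w(A)\ge w(A_{kk})$ for every $k$. So the whole task is to prove $w(A)\ge \tfrac12\|A_{ij}\|$ for each pair $i<j$.

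Fix $i<j$. Because $A$ is upper triangular and $i<j$, the entry $A_{ji}=O$, so Theorem \ref{th-nXn2} yields
\[
w(A)\ \ge\ w\Big(\begin{bmatrix} A_{ii} & A_{ij}\\ O & A_{jj}\end{bmatrix}\Big).
\]
I would therefore work entirely on $H_i\oplus H_j$, write $M=\begin{bmatrix} A_{ii} & A_{ij}\\ O & A_{jj}\end{bmatrix}$, and split $M=T+S$ with $T=\begin{bmatrix} O & A_{ij}\\ O & O\end{bmatrix}$ and $S=\begin{bmatrix} A_{ii} & O\\ O & A_{jj}\end{bmatrix}$. A direct check gives $T^2=O$ and $\|T\|=\|A_{ij}\|$, so by the fact recorded in the introduction $w(T)=\tfrac12\|T\|=\tfrac12\|A_{ij}\|$. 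Hence it suffices to show $T\perp_w S$, since then $w(M)=w(T+S)\ge w(T)=\tfrac12\|A_{ij}\|$.

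To verify $T\perp_w S$ I would invoke Theorem \ref{th-001}. Pick unit vectors $v_n\in S_{H_j}$ with $\|A_{ij}v_n\|\to\|A_{ij}\|$ and set $u_n=A_{ij}v_n/\|A_{ij}v_n\|$. For a fixed $\theta\in[0,2\pi)$ I would test with $z_n^{\theta}=\tfrac{1}{\sqrt2}(e^{i\alpha_n}u_n,\,v_n)\in S_{H_i\oplus H_j}$, the phases $\alpha_n$ still to be chosen. A short computation gives $\langle Tz_n^{\theta},z_n^{\theta}\rangle=\tfrac12 e^{-i\alpha_n}\|A_{ij}v_n\|$ and $\langle Sz_n^{\theta},z_n^{\theta}\rangle=\tfrac12\sigma_n$ with $\sigma_n=\langle A_{ii}u_n,u_n\rangle+\langle A_{jj}v_n,v_n\rangle$. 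The modulus of the first quantity tends to $\tfrac12\|A_{ij}\|=w(T)$ regardless of $\alpha_n$, so condition (i) holds. For condition (ii) I would choose $\alpha_n$ so that $e^{-i\theta}e^{-i\alpha_n}\overline{\sigma_n}$ is a nonnegative real, which forces $\mathrm{Re}\{e^{-i\theta}\langle Tz_n^{\theta},z_n^{\theta}\rangle\overline{\langle Sz_n^{\theta},z_n^{\theta}\rangle}\}=\tfrac14\|A_{ij}v_n\|\,|\sigma_n|\ge 0$ for every $n$; passing to the limit gives (ii). Thus $T\perp_w S$ and the theorem follows.

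The main obstacle is precisely the interference of the diagonal blocks: the scalars $\langle A_{ii}u_n,u_n\rangle$ and $\langle A_{jj}v_n,v_n\rangle$ are uncontrolled and could, a priori, spoil the sign condition for some $\theta$. The resolution is that the numerical radius is insensitive to the phase of $A_{ij}$ (equivalently, $T\mapsto e^{i\alpha}T$ leaves $w$ unchanged), and this single degree of freedom is exactly enough to realign $\langle Tz_n^{\theta},z_n^{\theta}\rangle$ against $\overline{\langle Sz_n^{\theta},z_n^{\theta}\rangle}$; crucially, Theorem \ref{th-001} allows the approximating sequence to depend on $\theta$, so no uniform choice of phase is required. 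The degenerate cases $A_{ij}=O$ (trivial) and $\sigma_n=0$ (any $\alpha_n$ works) I would dispose of separately, but they are routine.
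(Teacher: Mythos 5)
Your proposal is correct and takes essentially the same route as the paper: the same reduction via Theorem \ref{th-nXn2} to the corner block $\left[\begin{smallmatrix} A_{ii} & A_{ij}\\ O & A_{jj}\end{smallmatrix}\right]$, the same splitting into $T=\left[\begin{smallmatrix} O & A_{ij}\\ O & O\end{smallmatrix}\right]$ and the diagonal part $S$, and the same verification of $T\perp_w S$ through Theorem \ref{th-001} using test vectors built from norming vectors of $A_{ij}$. The only variation is technical: where the paper tests with the two sign choices $\pm A_{ij}X_{mj}$ and argues by dichotomy that one of them satisfies the sign condition for each $\theta$, you rotate the first component by a phase $e^{i\alpha_n}$ chosen per $\theta$ so that the relevant real part is nonnegative term by term, which is if anything slightly cleaner than the paper's ``without loss of generality'' step.
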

\begin{proof}
	Suppose $i,j\in\{1,2,\ldots,n \},i<j.$ From Theorem \ref{th-nXn2}, we have 
	\[
	w(A)\geq w\Big(\begin{bmatrix}
	A_{ii}&A_{ij}\\
	O&A_{jj}
	\end{bmatrix}\Big).
	\]
	We only have to show that 
	\[ w\Big(\begin{bmatrix}
	A_{ii}&A_{ij}\\
	O&A_{jj}
	\end{bmatrix}\Big)\geq\frac{\|A_{ij}\|}{2}.\]
	 Let
	\[	T =\begin{bmatrix}
	O&A_{ij}\\
	O&O
	\end{bmatrix},
	S=\begin{bmatrix}
	A_{ii}&O\\
	O&A_{jj}
	\end{bmatrix}.
	\]
	 We claim that $T\perp_wS.$ Clearly, $w(T)=\frac{\|A_{ij}\|}{2}.$ Let $\{X_{mj}\}$ be a sequence in $S_{H_j}$ such that $\lim_{m\to \infty}\|A_{ij}X_{mj}\|=\|A_{ij}\|$ and
	\[Z_{1m}=\frac{1}{\sqrt{2}\|A_{ij}\|}(A_{ij}X_{mj},\|A_{ij}\|X_{mj}),~Z_{2m}=\frac{1}{\sqrt{2}\|A_{ij}\|}(-A_{ij}X_{mj},\|A_{ij}\|X_{mj}).\]
	Then $\lim_{m\to\infty}\langle TZ_{1m},Z_{1m}\rangle=\frac{\|A_{ij}\|}{2}$ and $\lim_{m\to\infty}\langle TZ_{2m},Z_{2m}\rangle=\frac{-\|A_{ij}\|}{2}.$ Clearly, $\langle SZ_{1m},Z_{1m}\rangle=\langle SZ_{2m},Z_{2m}\rangle.$ So without loss of generality, we may assume that for each $\theta \in [0,2\pi),$ either
	$$Re\{e^{-i\theta}\langle TZ_{1m},Z_{1m}\rangle \overline{\langle SZ_{1m},Z_{1m}\rangle}\}\geq 0~\text{or}~ Re\{e^{-i\theta}\langle TZ_{2m},Z_{2m}\rangle \overline{\langle SZ_{2m},Z_{2m}\rangle}\}\geq 0.$$
 Therefore, by Theorem \ref{th-001}, $T\perp_wS.$ Hence, $w(T+S)\geq w(T)=\frac{\|A_{ij}\|}{2}.$ This completes the proof of the theorem.
\end{proof}

\begin{remark}
	$(i)$ Following \cite[Th. 3.7]{HKS}, for $A,B,C\in \mathbb{B}(\mathbb{H})$ and 
	\[ T = \left[ \begin{array}{ccc}
	A & B  \\
	O & D  \end{array} \right],\]
	it is easy to see that $w(T)\geq \max\{w(A),w(D),\frac{1}{2}w(B)\}.$ But from Theorem \ref{th-operatormatrix}, we get $w(T)\geq \max\{w(A),w(D),\frac{\|B\|}{2}\},$ which is clearly greater than or equal to $\max\{w(A),w(D),\frac{1}{2}w(B)\}.$ Thus, in this special case, Theorem \ref{th-operatormatrix} improves on \cite[Th. 3.7]{HKS}.\\
	
	$(ii)$ In \cite[Cor. 3.3]{GW}, Gau and Wu proved that if $A$ is an $n\times n$ block shift operator, i.e., if 
	\[
	A = \begin{bmatrix} 
	O & A_1 &  \\
      & O   & A_2 &  & \\
	 &  &O&\ddots    \\
	 &  &&\ddots& \ddots\\
	&&&& O& A_{k-1}\\
	&&&&&O
	\end{bmatrix},
	\]
	where $A_j$ is an $n_j\times n_{j+1}$ complex matrix, then $w(A)\geq \max\{w(B),w(C)\},$ where $B$ and $C$ are as follows.
		\[
	B = \begin{bmatrix} 
     0 & m(A_1) &  \\
     & 0   & m(A_2) &  & \\
     &  &0&\ddots    \\
     &  &&\ddots& \ddots\\
     &&&& 0& m(A_{k-1})\\
     &&&&&0
	\end{bmatrix}
	\text{and}
	\]
	\[
	C = \begin{bmatrix} 
	0 & m(A_1^*) &  \\
	& 0   & m(A_2^*) &  & \\
	&  &0&\ddots    \\
	&  &&\ddots& \ddots\\
	&&&& 0& m(A_{k-1}^*)\\
	&&&&&0
	\end{bmatrix}.
	\]
Clearly, if we choose all ${A_j}'s$ non-zero and $m(A_j)=m(A_j^*)=0,$ then Theorem \ref{th-operatormatrix} gives a better bound than \cite[Cor. 3.3]{GW}. Even if we choose 	${A_j}'s$ such that $m(A_j)$ and $m(A_j^*)$ are non-zero, then also Theorem \ref{th-operatormatrix} may give better bound than \cite[Cor. 3.3]{GW}. For example, if we consider 
\[
A_1=\begin{bmatrix}
4&0\\
0&1
\end{bmatrix},
A_2=\begin{bmatrix}
6&0\\
0&2
\end{bmatrix}
\text{and}
~~A=\begin{bmatrix}
O&A_1&O\\
O&O&A_2\\
O&O&O	
\end{bmatrix},	
\]
then clearly, 
\[
B=C=\begin{bmatrix}
0&1&0\\
0&0&2\\
0&0&0
\end{bmatrix}.
\]
Now, using the inequality $w(B)^2\leq \frac{1}{2}\|BB^*+B^*B\|$ \cite[Th. 1]{K}, we get $w(B)=w(C)<3=\max\{\frac{\|A_1\|}{2},\frac{\|A_2\|}{2}\}.$	
\end{remark}

Using Theorem \ref{th-nXn}, we can obtain a lower bound of the numerical radius for $n\times n$ scalar matrix.

\begin{theorem}\label{th-matrix}
	Let $A=(a_{ij})_{n\times n}\in M_{n\times n}(K).$ Then $w(A)\geq \max\{w(T_i):1\leq i\leq n\},$ where $T_i=(t^i_{jk})_{n\times n}$ for each $i=1,2,\ldots,n$ and 
	\[	t^i_{jk} =
	\left\{
	\begin{array}{ll}
	0  & \mbox{if } j=i ~\mbox{or} ~k=i \\
	a_{jk} & \mbox{otherwise. } 
	\end{array}
	\right.
	\]
i.e., 
	\[
	T_i=\begin{bmatrix}
	a_{11} &\ldots&a_{1(i-1)}&0  &a_{1(i+1)}&\ldots&a_{1n}\\
	\vdots &\vdots &\ldots&\vdots &\vdots&  \vdots   & \ldots \\
	a_{(i-1)1} &\ldots&a_{(i-1)(i-1)}&0  &a_{(i-1)(i+1)}&\ldots&a_{(i-1)n}\\
	0 &\ldots&0&0&0&\ldots&0 \\
	a_{(i+1)1} &\ldots&a_{(i+1)(i-1)}&0  &a_{(i+1)(i+1)}&\ldots&a_{(i+1)n}\\
		\vdots &\ldots&\vdots &\vdots&  \vdots   & \ldots &\vdots\\
	a_{n1} &\ldots&a_{n(i-1)}&0  &a_{n(i+1)}&\ldots&a_{nn}	
	\end{bmatrix}
	\]
\end{theorem}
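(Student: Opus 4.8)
The plan is to recognize Theorem~\ref{th-matrix} as the scalar-matrix specialization of Theorem~\ref{th-nXn}. Indeed, in Theorem~\ref{th-nXn} we may take each $H_i=K$ (viewed as a one-dimensional complex Hilbert space), so that $\mathbb{H}=\oplus_{i=1}^n K = K^n$ and each block $A_{ij}\in\mathbb{B}(H_j,H_i)$ becomes the scalar $a_{ij}$ acting by multiplication. Under this identification the block matrix $A=(A_{ij})$ is exactly the scalar matrix $(a_{ij})_{n\times n}$, and the auxiliary matrices $T_i=(t^i_{jk})$ defined block-wise in Theorem~\ref{th-nXn} coincide with the scalar matrices $T_i$ defined in the present statement, since setting $A_{jk}=a_{jk}$ and $O=0$ is the precise translation.

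With this dictionary in place, the conclusion follows directly. Theorem~\ref{th-nXn} gives
\[
w(A)\geq\max\Big\{w(A_{kk}),w(T_i):1\leq k\leq n,\,1\leq i\leq n\Big\}.
\]
For a scalar matrix, each diagonal entry $A_{kk}=a_{kk}$ is a scalar operator on $K$, and a quick computation shows $w(a_{kk})=|a_{kk}|=|\langle T_k e_k, e_k\rangle|$ is already dominated by $w(T_k)$ when $T_k$ is nontrivial; more to the point, the terms $w(A_{kk})$ may simply be absorbed or dropped, leaving the weaker but clean lower bound $w(A)\geq\max\{w(T_i):1\leq i\leq n\}$ asserted here. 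So the first step would be to state the specialization $H_i=K$, the second step to verify that the block construction of $T_i$ reduces to the scalar construction, and the third step to read off the inequality from Theorem~\ref{th-nXn}, discarding the $w(A_{kk})$ terms.

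There is essentially no obstacle here, since all the analytic content has already been established in Theorem~\ref{th-nXn}; the work is purely a matter of checking that the finite-dimensional, one-dimensional-block case is a legitimate instance of the general hypotheses. The only point requiring a sentence of care is confirming that a scalar $a\in K$, regarded as the multiplication operator on the one-dimensional space $K$, has numerical radius $w(a)=|a|$, so that the numerical-radius quantities in the two theorems refer to the same objects. Once that identification is recorded, the proof is a one-line invocation of Theorem~\ref{th-nXn}.
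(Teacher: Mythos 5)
Your proposal is correct and coincides with the paper's own route: the paper gives no separate proof of Theorem~\ref{th-matrix}, presenting it exactly as you do, namely as the specialization of Theorem~\ref{th-nXn} to one-dimensional spaces $H_i=K$, with the $w(A_{kk})$ terms dropped from the maximum (which is harmless, since discarding terms from a max only weakens a lower bound). One incidental slip worth fixing: $\langle T_k e_k,e_k\rangle=0$, not $a_{kk}$, because $T_k$ has its $k$th row and column zeroed out --- the correct observation would be $|a_{kk}|=|\langle T_j e_k,e_k\rangle|\leq w(T_j)$ for any $j\neq k$ when $n\geq 2$, but this remark plays no role in the argument's validity.
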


\begin{remark}
In \cite{GWu}, Gau and Wu proved that if $T=(a_{ij})$ is an $n\times n$ complex matrix and $B=(b_{ij}),$ where 
	\[	b_{ij} =
\left\{
\begin{array}{ll}
a_{ij} & \mbox{if } (i,j)\in \{(1,2),(2,3),\ldots,(n-1,n),(n,1)\} \\
0 & \mbox{otherwise, } 
\end{array}
\right.
\]
then $w(T)\geq w(B).$ Observe that if $T=(a_{ij})$ be a non-zero $n\times n$ complex matrix such that $a_{ij}=0$ for all $(i,j)\in \{(1,2),(2,3),\ldots,(n-1,n),(n,1)\},$ then the matrix $B$ becomes zero matrix. Clearly, in this case Theorem \ref{th-matrix} gives a better bound than the one given in \cite{GWu}.
\end{remark}

Finally we give a concrete example to show that the lower bounds obtained by us are better than the existing lower bounds of numerical radius for a matrix. We first list some existing lower bounds. Let $T\in M_{n\times n}(\mathbb{C}).$ Then \\
$(1)$ \cite[Remark 2.2 (iii)]{KMY} $w(T)\geq \frac{1}{\sqrt{2}}\sqrt{\|H\|^2+\|K\|^2}.$\\
$(2) $ \cite[Remark 5]{OK}  $ w(T)\geq \frac{1}{2}\sqrt{\||T|^2+|T^*|^2\|+2c(T^2)}.$\\
$(3)$ \cite[Th. 3.3]{BBP} $w(T)\geq \sqrt{\|H\|^2+c^2(K)}.$\\
$(4)$  \cite[Th. 3.3]{BBP} $w(T)\geq \sqrt{\|K\|^2+c^2(H)}.$\\
$(5)$ \cite[Th. 4.1]{HKS} $w(T)\geq \frac{\|T\|}{2}+\frac{|\|H\|-\|K\||}{2}.$\\
$(6)$ \cite[Th. 4.2]{HKS} $w(T)\geq \frac{\|T\|}{2}+\frac{|\|H\|-\frac{\|T\|}{2}|}{4}+\frac{|\|K\|-\frac{\|T\|}{2}|}{4}.$

\begin{example}
	Let 
	\[
	T=\begin{bmatrix}
	2.6i & 4i & 0\\
	0   & 2.5i & 0\\
	0  &  0  & 1+i
	\end{bmatrix}.
	\]
	Then the lower bounds of numerical radius for $T,$ estimated by different mathematicians, mentioned in $1-6,$ are presented in the following table.

		\begin{center}
		\begin{tabular}{|l|c|c|c}
			\hline
			1 & Kittaneh, Moslehian and Yamazaki \cite[Remark 2.2]{KMY} & $2.783$\\
			\hline
		    2 & Abu-Omar and Kittaneh \cite[Remark 5]{OK}  & $3.654$ (approx)\\
		    \hline
			3 & Bhunia, Bag and Paul \cite[Th. 3.3]{BBP}& 2.236\\
			\hline
			4 & Bhunia, Bag and Paul \cite[Th. 3.3]{BBP}& 3.391\\
			\hline
			5 & Hirzallah, Kittaneh and Shebrawi \cite[Th. 4.1]{HKS}& 3.316\\ 
			\hline
			6 & Hirzallah, Kittaneh and Shebrawi \cite[Th. 4.2]{HKS}& 2.968\\ 
			\hline
		\end{tabular}
		\end{center}
But from Theorem \ref{th-matrix}, we get $w(T)\geq 4.55,$ which is better than all the estimations mentioned above. 

\end{example}

\end{document}